\newtheorem{theorem}{Theorem}[section]
\newtheorem{corollary}[theorem]{Corollary}
\newtheorem{lemma}[theorem]{Lemma}
\newtheorem{proposition}[theorem]{Proposition}
\newtheorem{question}[theorem]{Question}
\theoremstyle{definition}
\newtheorem{definition}[theorem]{Definition}
\newtheorem{example}[theorem]{Example}
\newcommand{\Nn}{\mathbb{N}}
\newcommand{\Rr}{\mathbb{R}}
\newcommand{\aaa}{\mathbf{a}}
\newcommand{\bbb}{\mathbf{b}}
\newcommand{\ccc}{\mathbf{c}}
\newcommand{\kkk}{\mathbf{k}}
\newcommand{\bl}{\ensuremath{\boldsymbol\ell}}
\newcommand{\ppp}{\mathbf{p}}
\newcommand{\qqq}{\mathbf{q}}
\newcommand{\sss}{\mathbf{s}}
\newcommand{\ttt}{\mathbf{t}}
\newcommand{\uuu}{\mathbf{u}}
\newcommand{\xxx}{\mathbf{x}}
\newcommand{\yyy}{\mathbf{y}}
\newcommand{\compn}{\vDash}
\newcommand{\sm}{\setminus}
\DeclareMathOperator{\Shuffle}{Shuff}
\DeclareMathOperator{\JOIN}{\mathbf{Join}}
\DeclareMathOperator{\SPLIT}{\mathbf{Split}}
\newcommand{\concat}{\bullet}
\newcommand{\x}{\times}
\colorlet{darkgreen}{green!40!black}
\colorlet{darkblue}{blue!50!black}
\newcommand{\AAA}{\mathbf{A}}
\renewcommand{\Join}{\textbf{Join}}
\newcommand{\Split}{\textbf{Split}}
\newcommand{\PF}{\mathsf{PF}} 
\newcommand{\PC}{\mathsf{PC}} 
\newcommand{\IPF}{\mathsf{IPF}} 
\newcommand{\IPC}{\mathsf{IPC}} 
\newcommand{\pco}[2]{\PC_{#1}(#2)} 
\newcommand{\ipco}[2]{\IPC_{#1}(#2)} 
\newcommand{\excise}[1]{}  
\title[Enumerating parking completions using Join and Split]{Enumerating Parking Completions Using Join and Split}
\thanks{This work was completed in part at the 2018 Graduate Research Workshop in Combinatorics, which was supported in part by NSF grants \#1604458 and \#1603823, NSA grant \#H98230-18-1-0017, a generous award from the Combinatorics Foundation, and Simons Foundation Collaboration Grants \#426971 (to M. Ferrara) and \#316262 (to S. Hartke).\\ 
\indent JLM was supported in part by Simons Foundation Collaboration Grant \#315347.\\
\indent SB was supported in part by Simons Foundation Collaboration Grant \#427264.}
\let\thetitle\@title        
\let\theauthor\@author      
\date{\today}
\author[Adeniran]{Ayomikun Adeniran}
\address[A. Adeniran]{Department of Mathematics, Texas A\&M University, United States}
\email{\href{mailto:ayoijeng88@tamu.edu}{ayoijeng88@tamu.edu}}
\author[Butler]{Steve Butler}
\address[S. Butler]{Department of Mathematics, Iowa State University, United States}
\email{\href{mailto:butler@iastate.edu}{butler@iastate.edu}}
\author[Dorpalen-Barry]{Galen Dorpalen-Barry}
\address[G. Dorpalen-Barry]{Department of Mathematics, University of Minnesota, United States}
\email{\href{mailto:dorpa003@umn.edu}{dorpa003@umn.edu}}
\author[Harris]{Pamela E. Harris}
\address[P. E. Harris]{Department of Mathematics and Statistics, Williams College, United States}
\email{\href{mailto:peh2@williams.edu}{peh2@williams.edu}}
\author[Hettle]{Cyrus Hettle}
\address[C. Hettle]{School of Mathematics, Georgia Institute of Technology, United States}
\email{\href{mailto:chettle@gatech.edu}{chettle@gatech.edu}}
\author[Liang]{Qingzhong Liang}
\address[Q. Liang]{Department of Mathematics, Duke University, United States}
\email{\href{mailto:qingzhong.liang@duke.edu}{qingzhong.liang@duke.edu}}
\author[Martin]{Jeremy L. Martin}
\address[J. L. Martin]{Department of Mathematics, University of Kansas, United States}
\email{\href{mailto:jlmartin@ku.edu}{jlmartin@ku.edu}}
\author[Nam]{Hayan Nam}
\address[H. Nam]{Department of Mathematics, Iowa State University, United States}
\email{\href{mailto:hnam@iastate.edu}{hnam@iastate.edu}}
\begin{document}

\begin{abstract}
Given a strictly increasing sequence $\ttt$ with entries from $[n]:=\{1,\ldots,n\}$, a \emph{parking completion} is a sequence $\ccc$ with $|\ttt|+|\ccc|=n$ and $|\{t\in \ttt\mid t\le i\}|+|\{c\in \ccc\mid c\le i\}|\ge i$ for all $i$ in $[n]$.  We can think of $\ttt$ as a list of spots already taken in a street with $n$ parking spots and $\ccc$ as a list of parking preferences where the $i$-th car attempts to park in the $c_i$-th spot and if not available then proceeds up the street to find the next available spot, if any.  
A parking completion corresponds to a set of preferences $\ccc$ where all cars park.

We relate parking completions to enumerating restricted lattice paths and give formulas for both the ordered and unordered variations of the problem by use of a pair of operations termed \textbf{Join} and \textbf{Split}.  Our results give a new volume formula for most Pitman-Stanley polytopes, and enumerate the \emph{signature parking functions} of Ceballos and Gonz\'alez D'Le\'on.
\end{abstract}
\maketitle

\section{Introduction}
Let $[n]=\{1,\dots,n\}$.  A \emph{parking function} is a sequence $\ccc=(c_1,\dots,c_n)\in[n]^n$ such that there exists a permutation $\sigma$ of $[n]$ such that $c_{\sigma(i)} \leq i$ for all $i\in[n]$.   The name ``parking function'' comes from the following setup: $n$ cars attempt to park in a one-way street with spots labeled from $1$ to $n$.  The $i$-th car parks at its preferred spot $c_i$, if it is available; otherwise, it parks at the first available spot after $c_i$.  If all spots from $c_i$ on are filled, then the $i$-th car cannot park.  A parking function is then a sequence of preferences so that all $n$ cars are able to park.  It is well known that the number of parking functions of length~$n$ is $(n+1)^{n-1}$, and the number of \emph{increasing parking functions} (i.e., increasing rearrangements of parking functions) is the Catalan number $C_n=\frac{1}{n+1}\binom{2n}{n}$.  In addition, both parking functions and increasing parking functions can be interpreted as labeled and unlabeled lattice paths, respectively.  For more on parking functions and their extensive combinatorial connections, see the survey by Yan \cite{yan}.  

Suppose that $m$ of the $n$ spots are already taken.  We want to determine the possible preferences for $n-m$ cars so that they can all successfully park.  We call the set of successful preference sequences the \emph{parking completions for a sequence $\ttt=(t_1,\dots,t_m)$} where the entries of $\ttt$ denote which spots are \emph{taken} in increasing order.  We will let $\pco{n}{\ttt}$ denote the set of parking completions of $\ttt$ in $[n]$ and $\ipco{n}{\ttt}$ denote the set of (weakly) increasing parking completions of $\ttt$ in $[n]$.

\begin{theorem}\label{thm:main2}
The number of parking completions of $\ttt=(t_1,\dots,t_m)$ in $[n]$ is 
\[
\big|\pco{n}{\ttt}\big|=\sum_{\bl\in L_n(\ttt)} \binom{n-m}{\bl}\prod_{j=1}^{m+1}(\ell_j+1)^{(\ell_j-1)},
\]
where
\[L_n(\ttt)= \left\{\bl=(\ell_1,\dots,\ell_{m+1})\in\Nn^{m+1}\,\middle\vert
\begin{array}{l}
\ell_1+\cdots+\ell_j \geq t_j-j \text{ for all } j\in[m], \text{ and}\\
 \ell_1+\cdots+\ell_{m+1}=n-m 
 \end{array} \right\}.\]
\end{theorem}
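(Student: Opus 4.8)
The plan is to realize a parking completion as a shuffle of smaller, \emph{ordinary} parking functions, one for each gap created by the taken spots, so that the product formula becomes a count of the form (number of interleavings) $\times$ (local parking data). The spots $t_1<\cdots<t_m$ cut $[n]$ into $m+1$ consecutive regions, the last ending at $n$, and since $|\ttt|+|\ccc|=n-m+m=n$ a completion assigns preferences to exactly $n-m$ cars. I would prove that a completion is equivalent to the data of a composition $\bl=(\ell_1,\dots,\ell_{m+1})\in\Nn^{m+1}$ of $n-m$, an ordinary parking function $\ppp^{(j)}\in\PF_{\ell_j}$ for each region $j$, and an interleaving of the $m+1$ blocks of cars. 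Granting this equivalence, the count is immediate: the interleavings of blocks of sizes $\ell_1,\dots,\ell_{m+1}$ number $\binom{n-m}{\bl}$, each block contributes the $(\ell_j+1)^{\ell_j-1}$ parking functions of length $\ell_j$, and the set of admissible compositions is exactly $L_n(\ttt)$.

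The two directions of this equivalence are the operations named in the title. For \Join, given such data I would shift each block's parking function into $[n]$ by an offset prescribed by $\bl$ and $\ttt$ (anchoring the first $m$ blocks at the left ends of their regions, and anchoring the final block at spot $n$), and then interleave the cars according to the chosen shuffle to produce a preference list $\ccc\in[n]^{n-m}$. For \Split I would run the parking process on a given completion and read off which block each car belongs to, recovering the interleaving and, after normalizing preferences block by block, the local parking functions $\ppp^{(j)}$. The central claims are that \Join outputs a valid completion exactly when $\bl\in L_n(\ttt)$, and that \Join and \Split are mutually inverse.

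The inequalities defining $L_n(\ttt)$ should emerge as precisely the condition for \Join to seat every car, via the barrier role of the taken spots: a car never moves leftward, so among $\{1,\dots,t_j\}$ the $t_j-j$ empty spots can only be filled by cars whose preferences lie at most $t_j$. Under the anchoring above, every block with index greater than $j$ prefers spots strictly past $t_j$, so these $t_j-j$ spots are filled exactly by the first $j$ blocks; hence $\ell_1+\cdots+\ell_j\ge t_j-j$, and $\ell_1+\cdots+\ell_{m+1}=n-m$ is conservation of cars. For the converse I expect these inequalities to be sufficient for \Join to park everyone, by the usual prefix (ballot-type) characterization of when a preference list is a parking function.

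The main obstacle I anticipate is verifying that \Split and \Join are genuinely inverse, and in particular that each block's normalized preferences form an \emph{unconstrained} parking function even though a car may overflow past a taken spot into a later region. Overflow couples the regions, so the delicate point is to choose the offsets (and the left-versus-right anchoring) so that distinct local data never produce the same preference list and every completion is attained exactly once; this is exactly the bookkeeping that $L_n(\ttt)$ is meant to control. I would first pin down the definitions on small cases---e.g.\ $\ttt=(1)$ and $\ttt=(2)$ in $[3]$, together with the barrier-free case $m=0$, which must return the classical count $(n+1)^{n-1}$---and then establish the general bijection by induction on $m$, peeling off one region at a time.
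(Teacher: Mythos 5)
Your overall architecture---decompose a completion into a composition $\bl\in L_n(\ttt)$, one ordinary parking function per block, and a shuffle counted by $\binom{n-m}{\bl}$---is exactly the paper's, but your choice of offsets breaks the bijection. Anchoring the intermediate blocks at the left ends of the regions cut out by $\ttt$ is neither injective nor surjective. Concretely, take $n=5$, $\ttt=(2,4)$, so $m=2$ and the regions are $\{1\}$, $\{3\}$, $\{5\}$. The completion $(1,1,3)$ is produced twice by your $\JOIN$: once from $\bl=(3,0,0)$ with the single parking function $(1,1,3)\in\PF_3$, and once from $\bl=(2,1,0)$ with blocks $(1,1)\in\PF_2$ and $(1)\in\PF_1$ shifted to $(3)$ (anchored at the left end of region $2$). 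Meanwhile the valid completion $(1,1,4)$ (its sorted version satisfies $1\le 1$, $1\le 3$, $4\le 5$) is never produced, since an entry $4$ cannot arise from any block under your anchoring. The grand totals still agree (both equal $49$), which is precisely why no bijection with these offsets can exist, and why your proposed sanity checks would not catch the error: for $m\le 1$ there are no intermediate blocks and your scheme coincides with the correct one (note $n-\ell_{m+1}=L_m+m$, so your right-anchored final block already matches), hence $\ttt=(1)$ and $\ttt=(2)$ in $[3]$ and the barrier-free case $m=0$ all pass; the first failures require $m\ge 2$.

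The fix, and the paper's actual mechanism, is that the offsets must be determined by $\bl$ alone, not by $\ttt$: block $j$ is shifted by $L_{j-1}+(j-1)$, where $L_{j-1}=\ell_1+\cdots+\ell_{j-1}$, so the blocks are packed contiguously with exactly one ``violation'' column between consecutive blocks, wherever the taken spots happen to sit. Correspondingly, $\SPLIT$ acts on the \emph{increasing rearrangement} of a completion by repeatedly peeling off the longest prefix that is an increasing parking function (splitting at the first violation of the ballot condition), rather than reading block membership off the parking process as you suggest. This makes the decomposition canonical, eliminating the double count (in the example above, $(1,1,3)$ splits only as $\bl=(3,0,0)$, while $\bl=(2,1,0)$ produces $(1,1,4)$, its second block being shifted by $L_1+1=3$), and it makes the shifted blocks ``setwise increasing,'' so they can be uniquely recovered from any shuffle, giving equality in the shuffle bound. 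The taken spots then enter only through the compatibility condition $L_j\ge t_j-j$, which---via the equivalence $u_j\ge j+i$ if and only if $j\ge t_i-i+1$---is exactly what characterizes when the joined sequence is a completion. With this replacement for your region-anchored $\JOIN$/$\SPLIT$, the counting paragraph of your proposal goes through verbatim.
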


The notion of parking completions unifies work of Yan~\cite{yan-GPF}, Gessel--Seo \cite{GesselSeo} (under the name of \emph{$c$-parking functions}), and Ehrenborg--Happ~\cite{ehrenborg1,ehrenborg2}, who studied the case $\ttt=(1,\dots,\ell)$, and Diaconis--Hicks \cite{diaconis}, who considered the case that $\ttt=(i)$ for some $i\in[n]$.
Although it is not immediate, Theorem \ref{thm:main2} implies the following generalization of Diaconis and Hicks' enumeration of parking function shuffles \cite[Corollary 1]{diaconis}.

\begin{corollary}\label{thm:main1}
Let $m,n\ge 1$ and let $1\le i\le n-m$.  Then 
\[
|\pco{n}{(i+1,\ldots,i+m)}\big|=
\sum_{k=i}^{n-m}\binom{n-m}{k}(k+1)^{k-1}m(n-k)^{n-k-m-1}.
\]
\end{corollary}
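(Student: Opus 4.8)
The plan is to specialize Theorem~\ref{thm:main2} to $\ttt=(i+1,\dots,i+m)$, where $t_j=i+j$, and then collapse the resulting sum. First I would observe that for this choice of $\ttt$ the defining inequalities of $L_n(\ttt)$ become $\ell_1+\cdots+\ell_j\ge t_j-j=i$ for every $j\in[m]$. Since the partial sums $\ell_1+\cdots+\ell_j$ are weakly increasing in $j$ (all $\ell_j\ge 0$), these $m$ inequalities are all implied by the single condition $\ell_1\ge i$. Thus $L_n(\ttt)$ is exactly the set of weak compositions $(\ell_1,\dots,\ell_{m+1})$ of $n-m$ with $\ell_1\ge i$.

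Next I would set $k=\ell_1$ and peel off the first factor. Writing $\binom{n-m}{\bl}=\binom{n-m}{k}\binom{n-m-k}{\ell_2,\dots,\ell_{m+1}}$ and pulling the $j=1$ term $(k+1)^{k-1}$ out of the product, Theorem~\ref{thm:main2} gives
\[
\big|\pco{n}{\ttt}\big|=\sum_{k=i}^{n-m}\binom{n-m}{k}(k+1)^{k-1}\,S(n-m-k,m),\qquad
S(N,m)=\sum_{\substack{a_1+\cdots+a_m=N\\ a_r\ge0}}\binom{N}{a_1,\dots,a_m}\prod_{r=1}^m (a_r+1)^{a_r-1}.
\]
Comparing with the target identity, it remains to prove the closed form $S(N,m)=m(N+m)^{N-1}$; with $N=n-m-k$ this is precisely the factor $m(n-k)^{n-k-m-1}$.

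The main work is therefore the evaluation of $S(N,m)$, and this is the step I expect to be the crux. I would argue via generating functions. Let $f(x)=\sum_{a\ge0}(a+1)^{a-1}\frac{x^a}{a!}$; by the multiplication rule for exponential generating functions, $f(x)^m=\sum_{N\ge 0} S(N,m)\frac{x^N}{N!}$. Recalling the tree function $T(x)=\sum_{n\ge1}n^{n-1}\frac{x^n}{n!}$, which satisfies $T=xe^T$, a shift of summation index shows $f(x)=T(x)/x$, so that $S(N,m)=N!\,[x^{N+m}]T(x)^m$. Lagrange inversion applied to $T=xe^T$ yields $[x^n]T(x)^m=\frac{m}{n}\frac{n^{n-m}}{(n-m)!}$; taking $n=N+m$ gives $S(N,m)=m(N+m)^{N-1}$, which completes the derivation.

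As an alternative to Lagrange inversion, one could evaluate $S(N,m)$ bijectively, which may fit the combinatorial spirit of the paper better: the quantity $S(N,m)$ counts ordered partitions of an $N$-element set into $m$ blocks together with a rooted forest on each block, and attaching each block's roots to a fresh labeled root identifies these objects with forests of $m$ rooted trees on $N+m$ vertices whose roots form a prescribed $m$-subset. The classical count of such root-specified forests is $m(N+m)^{N-1}$. Either route settles the remaining identity and hence the corollary.
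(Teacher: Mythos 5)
Your proposal is correct, and each step checks: for $\ttt=(i+1,\dots,i+m)$ we indeed have $t_j-j=i$ for all $j$, so the constraints in $L_n(\ttt)$ collapse to $\ell_1\ge i$; the multinomial factors as you say; $f(x)=T(x)/x$ follows from $(a+1)^a/(a+1)!=(a+1)^{a-1}/a!$; and Lagrange inversion for $T=xe^T$ gives $[x^{N+m}]T^m=\frac{m}{N+m}\cdot\frac{(N+m)^N}{N!}$, hence $S(N,m)=m(N+m)^{N-1}$, which matches $m(n-k)^{n-k-m-1}$ at $N=n-m-k$ (including the boundary case $N=0$, where both sides equal $1$). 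It is worth comparing this to what the paper does, since the paper gives \emph{two} arguments. Its primary proof of Corollary~\ref{thm:main1} in \S4.3 does not pass through Theorem~\ref{thm:main2} at all: it decomposes a parking completion as $\widehat\ccc=\widehat\aaa\concat(\widehat\ppp\oplus(k+1))$ at the first violation, counts $\widehat\aaa$ by $(k+1)^{k-1}$, counts $\widehat\ppp$ as a parking completion of the block $(1,\dots,m-1)$ in $[n-k-1]$ using the Gessel--Seo formula~\eqref{eqn:EH} to get $m(n-k)^{n-k-m-1}$, and shuffles to get $\binom{n-m}{k}$; this requires the external input~\eqref{eqn:EH}, which your argument avoids. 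The paper then sketches exactly your reduction as an alternative derivation (after Proposition~\ref{prop:all_shifts}), isolating your identity $S(N,m)=m(N+m)^{N-1}$ as its Lemma~4.7, but proves that lemma combinatorially: the left side counts trees built from $m$ buckets with a rooted structure on each, and the right side counts the corresponding Pr\"ufer codes. So your Lagrange-inversion evaluation of the inner sum is the one genuinely different ingredient — more automatic, at the cost of invoking the tree function and Lagrange inversion rather than staying bijective — while your sketched alternative (ordered partitions into $m$ blocks carrying rooted forests, reassembled into root-specified forests on $N+m$ vertices counted by $m(N+m)^{N-1}$) is essentially the paper's lemma proof, since attaching the $m$ prescribed roots to one new vertex recovers the paper's trees on $n-k+1$ vertices and the classical forest count is itself usually proved by a Pr\"ufer-type argument.
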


For increasing parking completions, there is a similar result.
\begin{theorem}\label{count-IPC}
The number of increasing parking completions of $\ttt=(t_1,\ldots,t_m)$ in $[n]$ is
\[
\big|\IPC_n(\ttt)\big| = \sum_{\ell\in L_n(\ttt)} \prod_{i=1}^{m+1} \frac{1}{\ell_i+1} \binom{2\ell_i}{\ell_i},
\]
where
\[
L_n(\ttt)= \left\{\bl=(\ell_1,\dots,\ell_{m+1})\in\Nn^{m+1}\,\middle\vert
\begin{array}{l}
\ell_1+\cdots+\ell_j \geq t_j-j \text{ for all } j\in[m], \text{ and}\\
 \ell_1+\cdots+\ell_{m+1}=n-m 
 \end{array} \right\}.
 \]
\end{theorem}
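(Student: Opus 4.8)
The plan is to mirror the proof of Theorem~\ref{thm:main2}, keeping the same index set $L_n(\ttt)$ and the same \Split/\Join decomposition into $m+1$ blocks, but replacing the labeled objects (ordinary parking functions, together with the multinomial coefficient $\binom{n-m}{\bl}$ that distributes the cars) by their unlabeled analogues. Since the cars are now indistinguishable there is no distribution to make, so I expect each block to contribute an \emph{increasing} parking function of length $\ell_k$ rather than an ordinary one, and the multinomial to disappear. Using the classical fact (recalled in the introduction) that the number of increasing parking functions of length $\ell$ is the Catalan number $|\IPF_\ell| = \frac{1}{\ell+1}\binom{2\ell}{\ell}$, the target formula will follow once the decomposition $\IPC_n(\ttt)\;\longleftrightarrow\;\bigsqcup_{\bl\in L_n(\ttt)}\prod_{k=1}^{m+1}\IPF_{\ell_k}$ is established.

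First I would set up a clean combinatorial model. A weakly increasing sequence $\ccc$ is an increasing parking completion of $\ttt$ precisely when the combined occupancy condition $|\{t\in\ttt : t\le i\}| + |\{c\in\ccc : c\le i\}|\ge i$ holds for every $i\in[n]$. For a \emph{sorted} sequence this is equivalent to the single family of inequalities $c_i\le u_i$ for all $i$, where $u_i$ is the $i$-th smallest free spot, i.e.\ the $i$-th element of $[n]\setminus\{t_1,\dots,t_m\}$ in increasing order. Thus $\IPC_n(\ttt)$ is in bijection with the lattice paths that stay weakly below the staircase boundary $u_i = i + g_i$, where $g_i$ counts the taken spots lying below $u_i$; this is a nondecreasing step function that rises by one exactly as the path crosses each taken spot. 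Writing things this way makes the connection to restricted lattice paths advertised in the introduction explicit, and turns the taken spots into the $m$ upward jumps of the boundary.

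The key step, and the one I expect to be the main obstacle, is the factorization of these paths, which is exactly the role of \Split. The prototype is the single-jump case $\ttt=(t)$: here the sequences $c_i\le u_i$ are the lattice paths under the diagonal relaxed by one unit from position $t$ onward, and decomposing such a path at its first return to the diagonal yields the truncated Catalan convolution $\sum_{\ell_1\ge t-1}\frac{1}{\ell_1+1}\binom{2\ell_1}{\ell_1}\,\frac{1}{\ell_2+1}\binom{2\ell_2}{\ell_2}$ with $\ell_1+\ell_2=n-1$, the truncation $\ell_1\ge t-1$ recording that the first return cannot occur before the jump. The general case iterates this: \Split cuts a path at the successive forced returns to the diagonal determined by the $m$ jumps, producing an ordered list of $m+1$ independent Dyck paths of semilengths $\ell_1,\dots,\ell_{m+1}$, while \Join recombines them. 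The delicate point is to check that the ``no early return'' conditions imposed by the jumps translate precisely into the defining inequalities of $L_n(\ttt)$, namely $\ell_1+\cdots+\ell_j\ge t_j-j$ for $j\in[m]$ together with $\ell_1+\cdots+\ell_{m+1}=n-m$, and that \Split and \Join are mutually inverse; I would prove these by induction on $m$, peeling off one block (equivalently, one jump) at a time.

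With the bijection in hand the count is immediate: each block of size $\ell_k$ is an arbitrary Dyck path, hence an arbitrary increasing parking function of length $\ell_k$, contributing the factor $\frac{1}{\ell_k+1}\binom{2\ell_k}{\ell_k}$, and summing the products over all admissible block-size vectors $\bl\in L_n(\ttt)$ gives the claimed formula. I would close by remarking that this is the exact unlabeled shadow of Theorem~\ref{thm:main2}: the two statements share the index set $L_n(\ttt)$ and the same \Split/\Join skeleton, and differ only in that labeled parking functions and the multinomial $\binom{n-m}{\bl}$ are replaced by increasing parking functions and no multinomial.
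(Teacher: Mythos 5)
Your proposal is correct and follows essentially the same route as the paper: the paper proves exactly your claimed decomposition via the $\SPLIT$/$\JOIN$ bijection (Theorem~\ref{thm:ipf-ipc}, whose compatibility conditions $\ell_1+\cdots+\ell_j\ge t_j-j$ are your ``no early return'' inequalities), then groups by the block-size vector $\bl$ and multiplies Catalan numbers. The only cosmetic difference is direction: you frame this as the unlabeled shadow of Theorem~\ref{thm:main2}, whereas the paper proves the increasing case first and obtains Theorem~\ref{thm:main2} from it by shuffling.
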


For the special case $\ttt=()$ (where no spots are taken), these formulas become $\big|\PC_n(\ttt)\big|=(n+1)^{n-1}$ and $\big|\IPC_n(\ttt)\big|=\frac1{n+1}\binom{2n}n$, the classical counts for parking functions and increasing parking functions, respectively.

Parking completions turn out to be a mild specialization of what were called \emph{generalized $\xxx$-parking functions} in~\cite{yan-GPF}.  Pitman and Stanley~\cite[Theorem~1]{PitmanStanley} showed that the number of $\xxx$-parking functions equals the volume of what is now called a \textit{Pitman-Stanley polytope}, and gave a formula which our Theorem~\ref{thm:main1} superficially resembles.  In fact the two formulas are not equivalent, often producing incomparable expressions for the number of parking completions as a composition (see Example~\ref{compare-PS-and-our-formula}), so that Theorem~\ref{thm:main1} provides a new volume formula for (most) Pitman-Stanley polytopes,

\textit{Rational Catalan combinatorics} is a recent development that has attracted significant attention (see \cite{ALW,ARW,GMV1,GMV2}).  The essential idea is to replace the condition $c_i\leq i$ in the definition of an increasing parking function with $c_i\leq qi$, where $q$ is a rational number.  Ceballos and Gonz\'alez~D'Le\'{o}n~\cite{CeballosGonzalez} studied a further generalization, \emph{signature Catalan combinatorics}.  Signature parking functions correspond to labeled lattice paths constrained to lie in some fixed Ferrers diagram.  There is in fact an easy bijection (Proposition~\ref{dyck-signature-equivalence}) between increasing parking completions and the signature Dyck paths of~\cite{CeballosGonzalez}, raising the possibility of applying our results to signature and rational Catalan combinatorics.

The paper will proceed as follows.  We begin with background on parking functions, lattice paths, and tuples in Section~\ref{sec:preliminaries}.  In Section~\ref{sec:join-split}, we introduce two inverse bijections $\JOIN$ and $\SPLIT$ between lists of parking functions and arbitrary non-decreasing integer sequences.  These two maps are then used to prove the main results in Section~\ref{sec:enumeration-proofs}. In Section~\ref{sec:enumerative-connections}, we discuss the applications to Pitman--Stanley polytopes and to signature Catalan combinatorics.

\section{Preliminaries}\label{sec:preliminaries} In this section, we give the necessary definitions and conventions for the rest of the paper.

\subsection{Parking Completions and Lattice Paths }\label{subsec:lattice-path}
Let $\ttt=(t_1,\dots,t_m)$ be a (strictly) increasing list of elements from $[n]$ and let $\uuu=(u_1,\ldots,u_{n-m})$ be the list of elements of $[n]\sm\{t_1,\dots,t_m\}$, sorted in (strictly) increasing order.  If we think of $\ttt$ as the taken spots, then $\uuu$ can be interpreted as the unoccupied spots.

\begin{definition} \label{defn:parking-completion}
An \emph{increasing parking completion} of $\ttt$ is a weakly increasing sequence $(c_1,\ldots,c_{n-m})\in[n]^{n-m}$ such that $(t_1,\dots,t_m,c_1,\dots,c_{n-m})$ is a parking function.  Equivalently, the sequence satisfies $c_i\le u_i$ for all $i$.  The set of all increasing parking completions of~$\ttt$ will be denoted $\IPC_n(\ttt)$.

A \emph{parking completion} of $\ttt$ is a sequence $\ccc\in[n]^{n-m}$ where when the entries are rearranged into a weakly increasing order the result is an increasing parking completion.  The set of all parking completions of~$\ttt$ will be denoted $\PC_n(\ttt)$.
\end{definition}

It is immediate from the definition that parking completions of~$\ttt$ are stable under rearrangement.

We will explore parking completions and increasing parking completions using lattice paths.
A \emph{lattice path} $L$ from $(1,1)$ to $(p+1,q+1)$ is a sequence of $p$ right steps and $q$ up steps on the integer lattice $\mathbb{Z}^2$.  Lattice paths with $q$ up steps are in bijection with weakly increasing nonnegative integer sequences $\ccc=(c_1,\dots,c_q)$: for each $1\leq i\leq q$ there is an up step at $x=c_i$.  An example of a lattice path is shown in Figure~\ref{fig:figure1}.

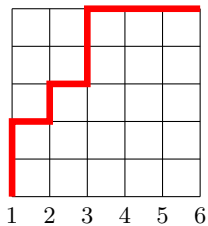
\begin{figure}[!htb]
\begin{center}
\begin{tikzpicture}[scale=0.5]
\foreach \x in {1,...,6} { \draw (\x,1)--(\x,6); \node at (\x,.5) {\scriptsize\x}; }
\foreach \y in {1,...,6} { \draw (1,\y)--(6,\y); }
\draw[line width = 2.5pt,red,shift={(1,1)}] (0,0)--(0,2)--(1,2)--(1,3)--(2,3)--(2,5)--(5,5);
\end{tikzpicture}
\end{center}
\caption{A lattice path with $p=q=5$ and $\ccc=(1,1,2,3,3)$.}
\label{fig:figure1}
\end{figure}

\begin{proposition}[{{\cite[Exercise 6.19(b),(s)]{stanley-ec2}}}]
There is a bijection between the lattice paths with $p=q=n$ that stay weakly above the diagonal (i.e., that contain no point $(i,j)$ with $i>j$) and increasing parking functions of length $n$.
\end{proposition}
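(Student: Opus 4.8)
The plan is to compose the bijection already described in the text—between lattice paths with $n$ up-steps and weakly increasing sequences $\ccc=(c_1,\dots,c_n)$ recording the $x$-coordinates of the up-steps—with the characterization of increasing parking functions by the inequalities $c_i\le i$, and then to verify that the diagonal constraint on paths translates exactly into those inequalities.

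First I would pin down the coordinates of the path in terms of $\ccc$. Starting at $(1,1)$, the $i$-th up-step occurs at $x=c_i$, and it is preceded by exactly $c_i-1$ right-steps and $i-1$ up-steps; hence immediately before the $i$-th up-step the path sits at the point $(c_i,i)$, and immediately after it sits at $(c_i,i+1)$. These are the only corners I will need to examine.

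Next I would establish the key equivalence: the path stays weakly above the diagonal (no point $(x,y)$ with $x>y$) if and only if $c_i\le i$ for all $i$. For the forward direction, the point $(c_i,i)$ lies on the path, so staying weakly above the diagonal forces $c_i\le i$. For the converse, I would argue that while the path is at height $y$ (i.e.\ after exactly $y-1$ up-steps), its $x$-coordinate never exceeds $c_y$, since the next up-step is taken at $x=c_y$; thus every point at height $y\le n$ satisfies $x\le c_y\le y$, while at the top height $n+1$ we trivially have $x\le n+1$. In this way the corner inequalities $c_i\le i$ propagate to the entire path.

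Finally I would invoke the characterization that a weakly increasing sequence $\ccc\in[n]^n$ is an increasing parking function precisely when $c_i\le i$ for all $i$; this is the $\ttt=()$ case of the equivalent condition $c_i\le u_i$ in Definition~\ref{defn:parking-completion}, since then $\uuu=(1,\dots,n)$. Combining the three steps, the map $L\mapsto\ccc$ restricts to a bijection between lattice paths with $p=q=n$ that stay weakly above the diagonal and increasing parking functions of length $n$. I do not anticipate a serious obstacle here; the only point requiring care is the converse of the key equivalence, namely checking that verifying $x\le y$ at the corners immediately before the up-steps is enough to guarantee $x\le y$ everywhere along the path.
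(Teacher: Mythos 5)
Your proof is correct and complete. The paper itself gives no proof of this proposition---it is cited from \cite[Exercise 6.19(b),(s)]{stanley-ec2}---but your argument (passing to the sequence $\ccc$ of up-step $x$-coordinates, locating the corner points $(c_i,i)$, showing the diagonal condition is equivalent to $c_i\le i$ for all $i$, and identifying that inequality with the $\ttt=()$ case of the condition $c_i\le u_i$ in Definition~\ref{defn:parking-completion}) is the standard one, and you correctly handle the one delicate point, namely propagating the corner inequalities to every point of the path by noting that at height $y$ the $x$-coordinate never exceeds $c_y\le y$.
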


We can modify this bijection to count increasing parking completions.  Suppose that $\ttt=(t_1,\dots,t_m)$ and $\uuu=(u_1,\dots,u_{n-m})$ are two increasing lists of integers whose disjoint union as sets is $[n]$.
Then
\begin{align*}
\IPC_n(\ttt)
&= \{(c_1,\dots,c_{n-m})\in[n]^{n-m} \mid  c_i\leq u_i \text{ for all $i$ and }c_1\le \cdots \le c_{n-m}\}.
\end{align*}
Consequently, we can describe $\IPC_n(\ttt)$ as the lattice paths lying \emph{weakly} above and to the left of the points $(i,u_i)$ for $1\leq i\leq n-m$.

Let $\lambda_{\uuu}=(u_1-1,\ldots,u_{n-m}-1)$ be an integer partition (where the terms are in \underline{increasing} order), and let $F_\uuu$ denote the corresponding Ferrers diagram (using the English convention).
Then 
\[
\big|\IPC_n(\ttt)\big|=\big|\{\text{lattice paths in $F_\uuu$}\}\big|.
\]
We will illustrate this in  Example~\ref{ex:ipc-det-small}.

We can use known results to count these lattice paths.  The Lindstr\"{o}m-Gessel-Viennot lattice path theorem (\cite[Theorem~2.7.1]{stanley-ec1}; see also \cite[Exercise 3.149]{stanley-ec1}) implies the following.

\begin{proposition}\label{cor:detform} 
Let $\ttt=(t_1,\dots,t_m)$ and $\uuu=(u_1,\dots,u_{n-m})$ be two increasing lists of integers whose disjoint union as sets is $[n]$.  Then
\[
\big|\ipco{n}{\ttt}\big| = \det\left[ \binom{u_{n-m+1-i}}{i-j+1} \right]_{i,j=1}^{n-m}.
\]
\end{proposition}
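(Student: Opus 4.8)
The plan is to count increasing parking completions as lattice paths and then extract the determinant from the Lindström--Gessel--Viennot (LGV) lemma. First I would invoke the reduction already established above: by Definition~\ref{defn:parking-completion} and the discussion preceding the statement, $\big|\ipco{n}{\ttt}\big|$ equals the number of weakly increasing sequences $(c_1,\dots,c_{q})$ with $1\le c_i\le u_i$ for all $i$, where $q=n-m$; equivalently, the number of monotone (up/right) lattice paths confined to the Ferrers diagram $F_\uuu$, i.e.\ lying weakly above and to the left of the boundary points $(i,u_i)$. Thus the whole problem is reduced to counting lattice paths in a staircase region, which is the natural habitat of the LGV lemma.

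Next I would realize this confined-path count as a determinant by passing to non-intersecting path families, following the standard LGV correspondence as in \cite[Theorem~2.7.1]{stanley-ec1} and \cite[Exercise~3.149]{stanley-ec1}. Concretely, I would place $q$ sinks $B_1,\dots,B_q$ equally spaced along an anti-diagonal and $q$ sources $A_1,\dots,A_q$ whose positions are governed by the boundary values $u_i$ (read from the top, which is the source of the reversed index $q+1-i$). Using the standard diagonal shift that turns the single confined path into a non-intersecting tuple and back, one arranges the coordinates so that (a) every non-intersecting family of paths $A_i\to B_{\sigma(i)}$ is forced to have $\sigma=\mathrm{id}$, and (b) such families are in bijection with the confined single paths counted above. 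The LGV lemma then gives $\big|\ipco{n}{\ttt}\big|=\det[\,e(A_i,B_j)\,]_{i,j=1}^{q}$, where $e(A_i,B_j)$ denotes the number of free (unconstrained) lattice paths from $A_i$ to $B_j$.

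Finally I would compute the entries. Since $e(A_i,B_j)$ counts free up/right paths between two lattice points, it is a single binomial coefficient determined by the horizontal and vertical displacements. With the chosen coordinates, a path from $A_i$ to $B_j$ uses $u_{q+1-i}$ total steps, of which $i-j+1$ are up-steps and $u_{q+1-i}-(i-j+1)$ are right-steps, so that $e(A_i,B_j)=\binom{u_{q+1-i}}{\,i-j+1\,}$; in the degenerate ranges this binomial is $0$, matching the absence of admissible paths. Substituting $q=n-m$ yields exactly $\det[\binom{u_{n-m+1-i}}{i-j+1}]_{i,j=1}^{n-m}$, as claimed. As a sanity check, for $\ttt=()$ one has $u_i=i$ and the determinant evaluates to the Catalan number $C_{n}$, recovering the classical count of increasing parking functions.

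The main obstacle is the bookkeeping in the second step: choosing the source and sink coordinates so that the non-intersecting condition is \emph{equivalent} to the staircase confinement $c_i\le u_i$ (and so that no permutation other than the identity contributes, which is what lets the signed LGV sum collapse to an honest count), and then verifying that the resulting displacements produce precisely $\binom{u_{n-m+1-i}}{i-j+1}$ in every position, including the boundary cases where the binomial vanishes. Once the source/sink placement and this equivalence are pinned down, writing the determinant and evaluating its entries is routine.
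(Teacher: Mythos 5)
Your proposal is correct and takes essentially the same route as the paper: the paper likewise reduces $\big|\IPC_n(\ttt)\big|$ to counting lattice paths in the Ferrers diagram $F_\uuu$ (in the discussion preceding the statement) and then simply cites the Lindstr\"om--Gessel--Viennot theorem \cite[Theorem~2.7.1]{stanley-ec1} for the determinant, supplying even less detail than you do. The source/sink bookkeeping you defer does work out as you sketch --- with sinks at consecutive points of an anti-diagonal the identity permutation is forced, each path of the family carries exactly one up step whose position (after an orientation flip) records one entry $c_i\le u_i$, and nonintersection is equivalent to $c_1\le\cdots\le c_{n-m}$ --- so there is no gap relative to the paper's own proof.
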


\begin{example}\label{ex:ipc-det-small} 
Let $n=4$, $m=2$, $\ttt=(1,3)$ and $\uuu=(2,4)$ (so $\lambda_\uuu=(1,3)$).  The lattice paths in $F_\uuu$ are shown in Figure~\ref{fig:figure2}. 

\begin{figure}[!ht]
\begin{center}
\begin{tikzpicture}[scale=0.5]
\fill[black!20!white] (1,1) rectangle (4,2) (1,0) rectangle (2,1);
\foreach \x in {1,...,4} { \draw (\x,0)--(\x,2); \node at (\x,-.5) {\scriptsize\x}; }
\foreach \y in {0,...,2} { \draw (1,\y)--(4,\y);}
\draw[line width = 2.5pt,red] (1,0)--(2,0)--(2,1)--(4,1)--(4,2);
\foreach \x/\y in {2/0, 4/1}
    \draw[blue,fill=blue] (\x,\y) circle (.2);
\node at (2.5,-1.5) {$(2,4)$};
\end{tikzpicture}\hfil
\begin{tikzpicture}[scale=0.5]
\fill[black!20!white] (1,1) rectangle (4,2) (1,0) rectangle (2,1);
\foreach \x in {1,...,4} { \draw (\x,0)--(\x,2); \node at (\x,-.5) {\scriptsize\x}; }
\foreach \y in {0,...,2} { \draw (1,\y)--(4,\y);}
\draw[line width = 2.5pt,red] (1,0)--(2,0)--(2,1)--(3,1)--(3,2)--(4,2);
\foreach \x/\y in {2/0, 4/1}
    \draw[blue,fill=blue] (\x,\y) circle (.2);
\node at (2.5,-1.5) {$(2,3)$};
\end{tikzpicture}\hfil
\begin{tikzpicture}[scale=0.5]
\fill[black!20!white] (1,1) rectangle (4,2) (1,0) rectangle (2,1);
\foreach \x in {1,...,4} { \draw (\x,0)--(\x,2); \node at (\x,-.5) {\scriptsize\x}; }
\foreach \y in {0,...,2} { \draw (1,\y)--(4,\y);}
\draw[line width = 2.5pt,red] (1,0)--(2,0)--(2,2)--(4,2);
\foreach \x/\y in {2/0, 4/1}
    \draw[blue,fill=blue] (\x,\y) circle (.2);
\node at (2.5,-1.5) {$(2,2)$};
\end{tikzpicture}\hfil
\begin{tikzpicture}[scale=0.5]
\fill[black!20!white] (1,1) rectangle (4,2) (1,0) rectangle (2,1);
\foreach \x in {1,...,4} { \draw (\x,0)--(\x,2); \node at (\x,-.5) {\scriptsize\x}; }
\foreach \y in {0,...,2} { \draw (1,\y)--(4,\y);}
\draw[line width = 2.5pt,red] (1,0)--(1,1)--(4,1)--(4,2);
\foreach \x/\y in {2/0, 4/1}
    \draw[blue,fill=blue] (\x,\y) circle (.2);
\node at (2.5,-1.5) {$(1,4)$};
\end{tikzpicture}\hfil
\begin{tikzpicture}[scale=0.5]
\fill[black!20!white] (1,1) rectangle (4,2) (1,0) rectangle (2,1);
\foreach \x in {1,...,4} { \draw (\x,0)--(\x,2); \node at (\x,-.5) {\scriptsize\x}; }
\foreach \y in {0,...,2} { \draw (1,\y)--(4,\y);}
\draw[line width = 2.5pt,red] (1,0)--(1,1)--(3,1)--(3,2)--(4,2);
\foreach \x/\y in {2/0, 4/1}
    \draw[blue,fill=blue] (\x,\y) circle (.2);
\node at (2.5,-1.5) {$(1,3)$};
\end{tikzpicture}\hfil
\begin{tikzpicture}[scale=0.5]
\fill[black!20!white] (1,1) rectangle (4,2) (1,0) rectangle (2,1);
\foreach \x in {1,...,4} { \draw (\x,0)--(\x,2); \node at (\x,-.5) {\scriptsize\x}; }
\foreach \y in {0,...,2} { \draw (1,\y)--(4,\y);}
\draw[line width = 2.5pt,red] (1,0)--(1,1)--(2,1)--(2,2)--(4,2);
\foreach \x/\y in {2/0, 4/1}
    \draw[blue,fill=blue] (\x,\y) circle (.2);
\node at (2.5,-1.5) {$(1,2)$};
\end{tikzpicture}\hfil
\begin{tikzpicture}[scale=0.5]
\fill[black!20!white] (1,1) rectangle (4,2) (1,0) rectangle (2,1);
\foreach \x in {1,...,4} { \draw (\x,0)--(\x,2); \node at (\x,-.5) {\scriptsize\x}; }
\foreach \y in {0,...,2} { \draw (1,\y)--(4,\y);}
\draw[line width = 2.5pt,red] (1,0)--(1,2)--(4,2);
\foreach \x/\y in {2/0, 4/1}
    \draw[blue,fill=blue] (\x,\y) circle (.2);
\node at (2.5,-1.5) {$(1,1)$};
\end{tikzpicture}
\end{center}
\caption{Lattice paths in $F_{(2,4)}$ and the corresponding increasing parking functions.}
\label{fig:figure2}
\end{figure}
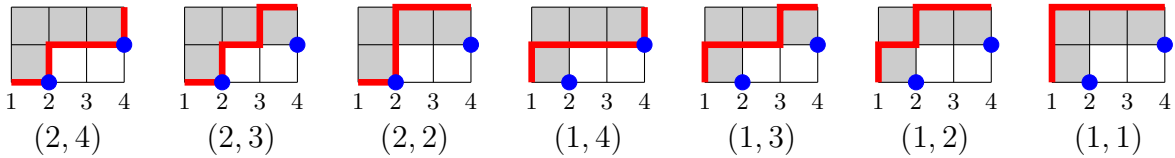

The computation from Proposition~\ref{cor:detform} gives:
\[
\big|\IPC_4((1,3))\big|=
\det\begin{pmatrix}
\binom{4}{1} & \binom{4}{0}\\[5pt]
\binom{2}{2} & \binom{2}{1} 
\end{pmatrix}
=\det\begin{pmatrix}
4 & 1 \\ 
1 & 2 
\end{pmatrix}
=7.
\]
\end{example}

\begin{example}\label{ex:unlabeled1} Let $n=8$, $m=4$, $\ttt=(1,2,6,7)$ and $\uuu=(3,4,5,8)$ (so $\lambda=(2,3,4,7)$).  Then the elements of $\IPC_8(\ttt)$ correspond to lattice paths from the bottom left to the top right of $F_\uuu$ shown in Figure~\ref{fig:figure3}.

\begin{figure}[!ht]
\begin{center}
\begin{tikzpicture}[scale=0.5]
\fill[color=black!20!white]
(1,0) rectangle (3,1)
(1,1) rectangle (4,2)
(1,2) rectangle (5,3)
(1,3) rectangle (8,4);
\foreach \x in {1,...,8} { \draw (\x,0)--(\x,4); \node at (\x,-.5) {\scriptsize\x}; }
\foreach \y in {0,...,4} { \draw (1,\y)--(8,\y); }
\foreach \x/\y in {3/0, 4/1, 5/2, 8/3} \draw[blue,fill=blue] (\x,\y) circle (.15);
\foreach \x in {1,...,8} \node at (\x,-.5) {\scriptsize\x};
\end{tikzpicture}
\end{center}
\caption{The sub-lattice $F_\uuu$ for Example~\ref{ex:unlabeled1}.}
\label{fig:figure3}
\end{figure}

By Proposition~\ref{cor:detform},
\[
\big|\IPC_{8}((1,2,6,7))\big| = 
\det {\begin{pmatrix}
\binom{8}{1}&\binom{8}{0}&\binom{8}{-1}&\binom{8}{-2} \\[5pt]
\binom{5}{2}&\binom{5}{1}&\binom{5}{0}&\binom{5}{-1} \\[5pt]
\binom{4}{3}&\binom{4}{2}&\binom{4}{1}&\binom{4}{0} \\[5pt]
\binom{3}{4}&\binom{3}{3}&\binom{3}{2}&\binom{3}{1} \end{pmatrix}}
=
\det {\begin{pmatrix}
8 & 1 & 0 & 0\\
10 & 5 & 1 & 0\\
4 & 6 & 4 & 1\\
0 & 1 & 3 & 3
\end{pmatrix}}
= 146.
\]
\end{example}

We can extend the bijection between lattice paths and increasing parking completions to decorated lattice paths $L$ and general (not necessarily increasing) parking completions.  Given a parking completion $\ccc$, label each up step of $L$ with the index $i$ for which $c_i$ is its $x$-coordinate (requiring that labels of consecutive up steps increase bottom to top).  For example, the sequence $(4,6,1,2,4)$ corresponds to the decorated lattice path shown in Figure~\ref{fig:figure4}.

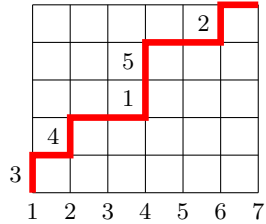
\begin{figure}[!ht]
\begin{center}
\begin{tikzpicture}[scale=0.5]
\foreach \x in {1,...,7} { \draw (\x,0)--(\x,5); \node at (\x,-.5) {\scriptsize\x}; }
\foreach \y in {0,1,...,5} { \draw (1,\y)--(7,\y);  }
\draw[line width  = 2.5pt,red,shift={(1,0)}] (0,0)--(0,1)--(1,1)--(1,2)--(3,2)--(3,4)--(5,4)--(5,5)--(6,5);
\node[left] at (1,.5) {\scriptsize 3}; 
\node[left] at (2,1.5) {\scriptsize 4}; 
\node[left] at (4,2.5) {\scriptsize 1}; 
\node[left] at (4,3.5) {\scriptsize 5}; 
\node[left] at (6,4.5) {\scriptsize 2}; 
\end{tikzpicture}
\end{center}
\caption{A decorated lattice path corresponding to $(4,6,1,2,4)$.}
\label{fig:figure4}
\end{figure}

\subsection{Operations with Lists}
In this section we set up notation for manipulation of lists, which will be necessary for studying parking completions.

\begin{definition}
If $\aaa=(a_1,\ldots,a_k)$ and $\bbb=(b_1,\ldots,b_\ell)$ are lists, then we have the following conventions:
\begin{enumerate}
\item{} $\aaa\oplus x=(a_1+x,\ldots,a_k+x)$ denotes adding the value $x$ to each entry of the list,
\item{} $\aaa\ominus x=(a_1-x,\ldots,a_k-x)$ denotes subtracting the value $x$ from each entry of the list, and
\item{} $\aaa\concat\bbb=(a_1,\ldots,a_k,b_1,\ldots,b_\ell)$ denotes the concatenation of two lists.
\end{enumerate}
\end{definition}

\begin{definition} \label{defn:shuffle}
A \emph{shuffle} of the lists $\aaa^{(1)},\dots,\aaa^{(k)}$ is a word on the multiset union of the letters of all the $\aaa^{(i)}$ with the letters coming from each $\aaa^{(i)}$ having their relative order preserved.  The set of all shuffles of $\aaa^{(1)},\dots,\aaa^{(k)}$ is denoted $\Shuffle(\aaa^{(1)},\dots,\aaa^{(k)})$.  
\end{definition}

For example, we have
\[\Shuffle((a,b),(x,y))=\{(a,b,x,y),(a,x,b,y),(a,x,y,b),(x,a,b,y),(x,a,y,b),(x,y,a,b)\}.\]
Note that
\begin{equation} \label{shuffle-inequality}
|\Shuffle(\aaa^{(1)},\dots,\aaa^{(k)})|\leq\binom{|\aaa^{(1)}|+\cdots+|\aaa^{(k)}|}{|\aaa^{(1)}|,\dots,|\aaa^{(k)}|}
\end{equation}
with equality if no letter appears in more than one of the lists $\aaa^{(i)}$.

Finally, let $\PF^*$ (respectively, $\IPF^*$) denote the set of all finite lists $\mathbf{C}=(\ccc^{(1)},\dots,\ccc^{(r)})$ of parking functions (respectively, increasing parking functions).  The sizes of the $\ccc^{(i)}$ do not have to be the same, and we also allow $\ccc^{(i)}$ to be an empty list.  In some cases, it may be convenient to regard an element of $\PF^*$ or $\IPF^*$ as an infinite list $(\ccc^{(1)},\ccc^{(2)},\dots)$, where $\ccc^{(i)}$ is the empty parking function for all $i$ sufficiently large.  The \emph{size} of $\mathbf{C}$ is $|\mathbf{C}|=\sum_i|\ccc^{(i)}|$.

\section{The join/split bijection}\label{sec:join-split}

The goal for this section is to introduce a pair of size-preserving bijections
\[
\IPF^* \begin{array}{c}
\xrightarrow{~\Join~} \\ \xleftarrow[~\Split~]{}
\end{array} \{\text{finite nondecreasing lists}\}.
\]
The right hand side corresponds to all possible nondecreasing preference sequences, of all finite lengths, for cars that could be produced.   The maps $\JOIN$ and $\SPLIT$ will be used to prove Theorem~\ref{count-IPC} and then we will carry out an appropriate ``reshuffling'' to establish Theorem~\ref{thm:main2}.

\subsection{Visualization of \Join\ and \Split} \label{sec:geom-join-split}

Recall from \S\ref{subsec:lattice-path} that every weakly increasing sequence of positive integers $\ppp=(p_1,\dots,p_n)$ corresponds to a lattice path $L:=L(\ppp)$, where the $p_i$ indicate the $x$-coordinates of up steps.  (We will follow the convention that the lattice path ends with infinitely many right steps, which we truncate as needed.)  The sequence $\ppp$ is a parking function if and only if $L$ never crosses below the line $y=x$. In other words, $\ppp$ is a parking function when $L$ is a Dyck path.  

If $\ppp$ is not a parking function, then we can look at the \emph{first violation} of $L$, namely the first time that $L$ crosses to the right of $y=x$.  The first violation will always correspond with a right step.  An example of this is shown in Figure~\ref{fig:figure5}: the dashed line is $y=x$ and the first violation is marked with a black dot.

\begin{figure}[!htb]
\begin{center}
\begin{tikzpicture}[scale=0.5]
\foreach \x in {1,...,9} { \draw (\x,0)--(\x,6); \node at (\x,-.5) {\scriptsize\x}; }
\foreach \y in {0,...,6} { \draw (1,\y)--(9,\y); }
\draw[very thick, blue, dashed] (1,0)--(7,6);
\draw[line width = 2.5pt, red] (1,0)--(1,1)--(2,1)--(2,3)--(5,3)--(5,4)--(8,4)--(8,5)--(9,5)--(9,6);
\draw[fill=black] (4.5,3) circle(.2);
\end{tikzpicture}
\end{center}
\caption{The lattice path $L(1,2,2,5,8,9)$ with the first violation of being a parking function marked.}
\label{fig:figure5}
\end{figure}
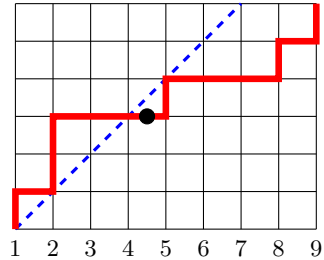

The portion of the lattice path that occurs before the first violation is a parking function.  The idea of $\SPLIT$ is to decompose a lattice path as a sequence of parking functions, connected by violating edges.  More precisely, to compute $\SPLIT(L)$, we proceed as follows:
\begin{itemize}
\item Find the longest Dyck path that is a prefix of the current lattice path.
\item Record the sequence of up steps before the first violation (which corresponds to the longest prefix that can be made into a PF when decorated as in Figure~\ref{fig:figure4}).
\item Shift coordinates so that the right endpoint of the first violation is now $(1,1)$, and redraw the line $y=x$ in the new coordinates.
\item Repeat until all up steps have been processed.
\end{itemize}
As every lattice path $L$ can be uniquely identified by a list of north steps $\ppp$, we often abuse notation and write $\SPLIT(\ppp)$, for $\SPLIT(L(\ppp))$.
The full process for applying $\SPLIT$ to $\ppp=(1,2,2,5,8,9)$ is illustrated in Figure~\ref{fig:figure6}; the result is $\Split(\ppp)=((1,2,2),(1),(),(1,2))$.  Note that in the last step we added a right step to the end in order to meet with the dashed line.

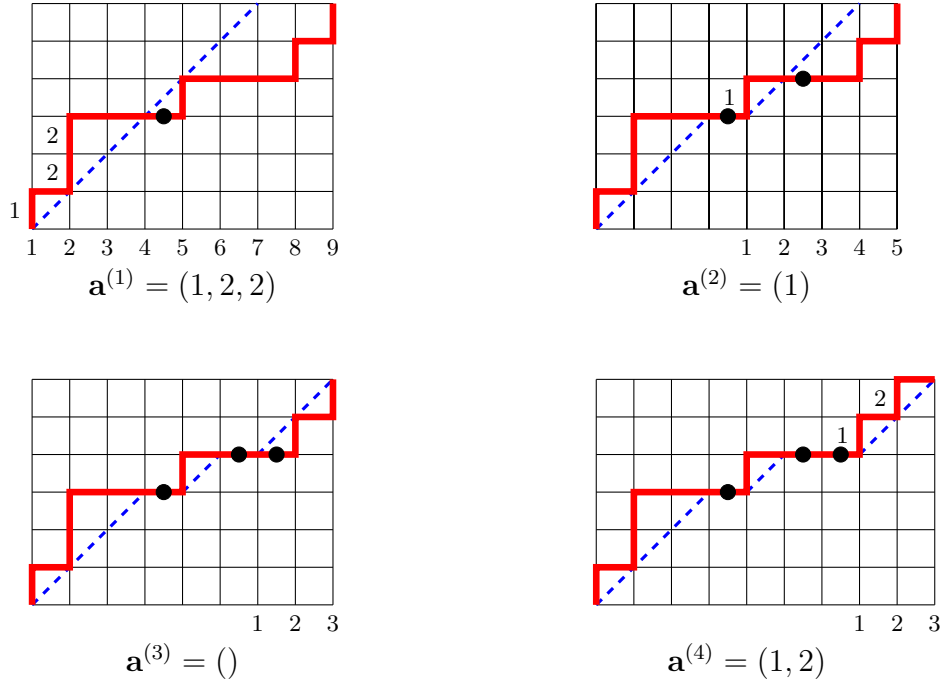
\begin{figure}[!htb]
\begin{center}
\begin{tikzpicture}[scale=0.5]
\foreach \x in {1,...,9} { \draw (\x,0)--(\x,6); \node at (\x,-.5) {\scriptsize\x}; }
\foreach \y in {0,...,6} { \draw (1,\y)--(9,\y); }
\draw[very thick, blue, dashed] (1,0)--(7,6);
\draw[line width=2.5pt, red] (1,0)--(1,1)--(2,1)--(2,3)--(5,3)--(5,4)--(8,4)--(8,5)--(9,5)--(9,6);
\node[left] at (1,.5) {\scriptsize$1$};
\node[left] at (2,1.5) {\scriptsize$2$};
\node[left] at (2,2.5) {\scriptsize$2$};
\node at (5,-1.5) {$\aaa^{(1)}=(1,2,2)$};
\draw[fill=black] (4.5,3) circle(.2); 

\begin{scope}[shift={(15,0)}]
\foreach \x/\y in {1/1,2/2,3/3,5/4,6/5,7/6} 
\foreach \x in {1,...,9} \draw (\x,0)--(\x,6);
\foreach \y in {0,...,6} \draw (1,\y)--(9,\y);
\foreach \x in {1,...,5} \node at (\x+4,-.5) {\scriptsize\x};
\draw[very thick, blue,dashed] (5,3)--(8,6) (1,0)--(4,3);
\draw[line width = 2.5pt, red] (1,0)--(1,1)--(2,1)--(2,3)--(5,3)--(5,4)--(8,4)--(8,5)--(9,5)--(9,6);
\node[left] at (5,3.5) {\scriptsize$1$};
\foreach \x/\y in {4.5/3, 6.5/4} \draw[fill=black] (\x,\y) circle(.2); 
\node at (5,-1.5) {$\aaa^{(2)}=(1)$};
\end{scope}

\begin{scope}[shift={(0,-10)}]
\foreach \x in {1,...,9} \draw (\x,0)--(\x,6);
\foreach \y in {0,...,6} \draw (1,\y)--(9,\y);
\foreach \x in {1,...,3} \node at (\x+6,-.5) {\scriptsize\x};
\draw[very thick, blue, dashed] 
   (5,3)--(6,4) 
   (1,0)--(4,3) 
   (7,4)--(9,6);
\draw[line width=2.5pt, red] (1,0)--(1,1)--(2,1)--(2,3)--(5,3)--(5,4)--(8,4)--(8,5)--(9,5)--(9,6);
\foreach \x/\y in {4.5/3, 6.5/4, 7.5/4} \draw[fill=black] (\x,\y) circle(.2); 
\node at (5,-1.5) {$\aaa^{(3)}=()$};
\end{scope}

 \begin{scope}[shift={(15,-10)}]
\foreach \x in {1,...,10} \draw (\x,0)--(\x,6);
\foreach \y in {0,...,6} \draw (1,\y)--(10,\y);
\foreach \x in {1,...,3} \node at (\x+7,-.5) {\scriptsize\x};
\draw[very thick, blue, dashed]    (5,3)--(6,4)  (1,0)--(4,3) (8,4)--(10,6);
\draw[line width = 2.5pt, red] (1,0)--(1,1)--(2,1)--(2,3)--(5,3)--(5,4)--(8,4)--(8,5)--(9,5)--(9,6)--(10,6);
\node[left] at (8,4.5) {\scriptsize$1$};
\node[left] at (9,5.5) {\scriptsize$2$};
\foreach \x/\y in {4.5/3, 6.5/4, 7.5/4} \draw[fill=black] (\x,\y) circle(.2); 
\node at (5,-1.5) {$\aaa^{(4)}=(1,2)$};
\end{scope}
\end{tikzpicture}
\end{center}
\caption{$\Split((1,2,2,5,8,9))=((1,2,2),(1),(),(1,2))$.  All violations are marked with black dots.}
\label{fig:figure6}
\end{figure}

The $\JOIN$ operation, the reverse of $\SPLIT$, is even easier to visualize.  Given a sequence $(\aaa^{(1)},\dots,\aaa^{(r)})$ of parking functions, attach the corresponding (possibly empty) Dyck paths end-to-end in order, inserting a right step (violation) between each. Then, write down the list of $x$-coordinates of the up steps, in weakly increasing order.

\subsection{Formal Definition of \Join\ and \Split}

\begin{definition}\label{defn:Join} 
Let $(\aaa^{(1)},\dots,\aaa^{(r)})\in \IPF^*$.  For each $i\in[r]$, let $\ell_i=|\aaa^{(i)}|$; let $L_i=\ell_1+\cdots+\ell_i$, by convention $L_0=0$, and define $\bbb^{(i)}=\aaa^{(i)}\oplus(L_{i-1}+i-1)$.  Then
\[
\Join((\aaa^{(1)},\dots,\aaa^{(r)})) = \bbb^{(1)}\concat\cdots\concat\bbb^{(r)}.
\]
\end{definition}

Equivalently, $\JOIN$ can be defined recursively by
\begin{equation*} 
\Join((\aaa^{(1)},\dots,\aaa^{(r)}))=\begin{cases}
() & \text{ if } r=0,\\
\aaa^{(1)}\concat\left[\Join((\aaa^{(2)},\dots,\aaa^{(r)}))\oplus(\ell_1+1)\right] &\text{ if } r>0.
\end{cases}
\end{equation*}
Note that $\Join((\aaa^{(1)},\dots,\aaa^{(r)}))$ is an increasing sequence
because all entries in $\aaa^{(i)}$ are at most $\ell_i$ and so all entries in  $\bbb^{(i)}$ are at most $\ell_i+L_{i-1}+i-1 = L_i+i-1$; on the other hand, the entries in  $\bbb^{(i+1)}$ are at least $1+(L_i+(i+1)-1)=L_i+i+1$.

\begin{definition}\label{defn:Split}
Let $\ppp=(p_1,\dots,p_n)$ be a nondecreasing sequence.  If $|\ppp|=0$, then $\Split(\ppp)=()$.  Otherwise,  \Split\ is defined recursively as follows:
\begin{itemize}
\item[]
Let $\aaa$ be the longest prefix of $\ppp$ that is a (possibly empty) $\IPF$, and write $\ppp=\aaa\concat\qqq$.  If $\qqq$ is not empty, then $\Split(\ppp)=(\aaa,\Split(\qqq\ominus(|\aaa|+1)))$; otherwise, $\SPLIT(\ppp)=\aaa$.
\end{itemize}
\end{definition}

Alternatively, \Split\ can be computed by the following iterative algorithm:

\begin{minipage}{\textwidth}
\begin{framed}
\noindent\textbf{Initialize:} $\ppp^{(0)}$ := $\ppp$; \ $L_0:= 0$; \ $r=0$

\noindent\textbf{Loop:} while $|\ppp^{(r)}|>0$

\indent\indent\indent\indent
\newlength{\savetabcolsep}
\setlength{\savetabcolsep}{\tabcolsep}
\setlength{\tabcolsep}{3pt}
\begin{tabular}{lll}
$r$&:= &$r+1$\\
$\aaa^{(r)}$ & := & the longest initial sequence of $\ppp^{({r-1})}$ that is an $\IPF$\\
$\qqq^{(r)}$ & := & suffix of $\ppp^{({r-1})}$ following $\aaa^{(r)}$ (so that $\ppp^{({r-1})}=\aaa^{(r)}\concat\qqq^{(r)}$)\\
$L_r$ &:= & $L_{r-1}+|\aaa^{(r)}|$\\
$\ppp^{(r)}$ & := & $\qqq^{(r)}\ominus(|\aaa^{(r)}|+1)$
\end{tabular}

\setlength{\tabcolsep}{\savetabcolsep}
\noindent\textbf{Output:} $(\aaa^{(1)},\dots,\aaa^{(r)})$
\end{framed}
\end{minipage}

\begin{example}
Let $\ppp=\ppp^{(0)}=(1,2,2,5,8,9)$.  The $\SPLIT$ algorithm proceeds as follows:
\[\begin{array}{l | llllllllllllll}
r && \aaa^{(r)} && \qqq^{(r)}  && L_r && \ppp^{(r)}\\ \hline
1 && (1,2,2) && (5,8,9) && 3 && (5,8,9)\ominus4 = (1,4,5)\\
2 && (1) && (4,5) && 4 && (4,5)\ominus2 = (2,3)\\
3 && () && (2,3) && 4 && (2,3)\ominus1 = (1,2) \\
4 && (1,2) && ()  && 6 && ()
\end{array}\]
Thus, $\Split(\ppp)=( (1,2,2), (1), (), (1,2) )$.

If we take this output, $( (1,2,2), (1),(), (1,2))\in \IPF^*$, and put it into $\JOIN$, then we have the following: $(\ell_1,\ell_2,\ell_3,\ell_4)=(3,1,0,2)$, $(L_1,L_2,L_3,L_4)=(3,4,4,6)$, and
\begin{align*}
&\hspace{-25pt}\Join( (1,2,2), (1),(), (1,2))\\
&= [\aaa^{(1)}\oplus({L_0+0})]\concat[\aaa^{(2)}\oplus({L_1+1})]\concat[\aaa^{(3)}\oplus({L_2+2})]\concat[\aaa^{(4)}\oplus({L_3+3})]\\
&=[ (1,2,2)\oplus0 ]\concat[ (1)\oplus4] \concat [()\oplus6] \concat[ (1,2)\oplus7]\\
&=(1,2,2)\concat(5)\concat()\concat(8,9)
= (1,2,2,5,8,9) = \ppp.
\end{align*}
\end{example}

\begin{theorem}
The functions $\JOIN$ and $\SPLIT$ are mutual inverses, hence bijections.
\end{theorem}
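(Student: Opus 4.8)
The plan is to show that the two composites $\SPLIT\circ\JOIN$ and $\JOIN\circ\SPLIT$ are each the identity; since $\JOIN$ lands in the nondecreasing lists (verified after Definition~\ref{defn:Join}) and $\SPLIT$ lands in $\IPF^*$ (each extracted block is by construction a longest $\IPF$ prefix, hence an $\IPF$, and the output is a finite list), this exhibits the two maps as mutually inverse bijections. Before starting I would record that $\SPLIT$ is genuinely well-defined and terminating. Maximality of a block $\aaa$ of length $\ell$ extracted from a nondecreasing positive sequence forces the following entry to exceed $\ell+1$: otherwise appending it to $\aaa$ would still satisfy $c_j\le j$ for all $j$ and hence still be an $\IPF$, contradicting maximality. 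Consequently, after subtracting $\ell+1$ the remaining suffix is again a nondecreasing sequence of positive integers, so the recursion stays inside the intended domain. Moreover the sum of the entries strictly decreases at every recursive call (it drops by $\sum_j a_j+(\ell+1)|\qqq|\ge 1$, whether or not $\ell=0$), so the algorithm terminates.

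For $\SPLIT\circ\JOIN=\mathrm{id}$ I would induct on the number of blocks $r$, with $r=0$ immediate. Write $\ppp=\JOIN(\aaa^{(1)},\dots,\aaa^{(r)})=\bbb^{(1)}\concat\cdots\concat\bbb^{(r)}$ as in Definition~\ref{defn:Join}. The crucial step is to identify the first block that $\SPLIT$ peels off. The bounds already established, namely that the entries of $\bbb^{(1)}$ are at most $L_1+0=\ell_1$ while the entries of $\bbb^{(2)}$ are at least $L_1+2=\ell_1+2$, show simultaneously that the length-$\ell_1$ prefix $\bbb^{(1)}=\aaa^{(1)}$ is an $\IPF$ and that entry $\ell_1+1$ of $\ppp$ is at least $\ell_1+2>\ell_1+1$, which already violates the parking condition at position $\ell_1+1$ and therefore in every longer prefix. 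Hence the longest $\IPF$ prefix of $\ppp$ is exactly $\aaa^{(1)}$. It then remains to check that the shifted suffix $(\bbb^{(2)}\concat\cdots\concat\bbb^{(r)})\ominus(\ell_1+1)$ equals $\JOIN(\aaa^{(2)},\dots,\aaa^{(r)})$: this is a direct comparison of the shift $L_{i-1}+i-1$ applied to the $i$-th block inside $\ppp$ with the shift $(L_{i-1}-L_1)+(i-2)$ applied to the same block of the tail, the two differing by exactly $L_1+1=\ell_1+1$. Applying the inductive hypothesis to the tail yields $\SPLIT(\ppp)=(\aaa^{(1)},\dots,\aaa^{(r)})$.

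For $\JOIN\circ\SPLIT=\mathrm{id}$ I would induct on the sum of the entries (the quantity shown to decrease in the termination argument), the empty list being the base case. Given a nondecreasing $\ppp$, let $\aaa$ be its longest $\IPF$ prefix, write $\ppp=\aaa\concat\qqq$, and set $(\aaa^{(2)},\dots,\aaa^{(r)})=\SPLIT(\qqq\ominus(|\aaa|+1))$, so that $\SPLIT(\ppp)=(\aaa,\aaa^{(2)},\dots,\aaa^{(r)})$. The inductive hypothesis applied to $\qqq\ominus(|\aaa|+1)$ gives $\JOIN(\aaa^{(2)},\dots,\aaa^{(r)})=\qqq\ominus(|\aaa|+1)$. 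Feeding this into the recursive form of $\JOIN$ then yields $\JOIN(\aaa,\aaa^{(2)},\dots,\aaa^{(r)})=\aaa\concat\bigl[\JOIN(\aaa^{(2)},\dots,\aaa^{(r)})\oplus(|\aaa|+1)\bigr]=\aaa\concat\bigl[(\qqq\ominus(|\aaa|+1))\oplus(|\aaa|+1)\bigr]=\aaa\concat\qqq=\ppp$, completing the induction.

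The main obstacle is the maximality argument of the second paragraph: everything hinges on the ``gap'' of size at least two that $\JOIN$ inserts between consecutive blocks, which guarantees both that $\aaa^{(1)}$ is a legal $\IPF$ prefix and that it cannot be extended. The remaining content is bookkeeping, namely tracking how the cumulative shifts $L_{i-1}+i-1$ interact with the single shift $|\aaa|+1=\ell_1+1$ that $\SPLIT$ applies, together with the termination check that licenses both inductions.
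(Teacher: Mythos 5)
Your proof is correct and follows essentially the same route as the paper's: both arguments proceed by induction, showing that $\JOIN$ and $\SPLIT$ each preserve the first $\IPF$ block and then recursing on the shifted tail $\qqq\ominus(|\aaa|+1)$. The only difference is one of detail, not of method: you explicitly verify the maximality fact (the gap of size at least two that $\JOIN$ inserts forces the longest $\IPF$ prefix of $\JOIN(\AAA)$ to be exactly $\aaa^{(1)}$), the shift bookkeeping, and termination of $\SPLIT$, all of which the paper's terser proof leaves implicit.
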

\begin{proof}
If $\ppp$ is an $\IPF$, then $\SPLIT(\ppp)=(\ppp)$ and $\JOIN((\ppp))=\ppp$.  Suppose that $\ppp$ is not an $\IPF$, then $\ppp = \aaa\concat\qqq$ where $\aaa$ is the maximal initial subsequence that is an $\IPF$.  Under the map $\SPLIT$, this will be sent to $(\aaa,\Split(\qqq\ominus(|\aaa|+1))$.  Conversely, given a list of increasing parking functions of the form $(\aaa)\concat \AAA$ under the map $\JOIN$, this will be sent to $\aaa\concat[\Join(\AAA)\oplus(|\aaa|+1)]$.  In particular, the two maps preserve the first initial $\IPF$.  Applying induction using $\qqq\ominus(|\aaa|+1)$ and $\AAA$ establishes the result.
\end{proof}

\section{Enumerating (increasing) parking completions}\label{sec:enumeration-proofs}

We now use the $\JOIN$ and $\SPLIT$ maps from  \S\ref{sec:join-split} to establish the enumerative results.

Throughout this section we let $\ttt=(t_1,\ldots,t_m)$ be the taken spots, and $\uuu=(u_1,\ldots,u_{n-m})$ be the unoccupied spots of $[n]$.

\begin{lemma} \label{u-inequalities}
We have the following:
\begin{enumerate}
\item For all $i$, we have $u_{t_i-i}<t_i<u_{t_i-i+1}$.
\item For all $i,j$ we have  $j\geq t_i-i+1$ if and only if $u_j\geq j+i$.
\end{enumerate}
\end{lemma}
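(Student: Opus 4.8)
The plan is to prove both statements by elementary counting arguments based on how the taken spots $\ttt$ and the unoccupied spots $\uuu$ interleave within $[n]$, deriving (2) as a consequence of (1). The key observation throughout is that, since $\ttt$ and $\uuu$ partition $[n]$ with each listed in strictly increasing order, for any threshold $x$ the number of taken spots at most $x$ plus the number of unoccupied spots at most $x$ equals $x$. To prove (1), I would fix $i$ and consider the initial segment $\{1,\ldots,t_i\}$. Since $t_1<\cdots<t_i$ are exactly the taken spots not exceeding $t_i$, precisely $i$ of the $t_i$ integers in this segment are taken, so exactly $t_i-i$ of them are unoccupied. Because $t_i$ itself is taken, these $t_i-i$ unoccupied integers all lie in $\{1,\ldots,t_i-1\}$ and, being the smallest unoccupied spots, are exactly $u_1,\ldots,u_{t_i-i}$. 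Hence $u_{t_i-i}<t_i$, while the next unoccupied spot $u_{t_i-i+1}$ must exceed $t_i$; this is precisely the chain $u_{t_i-i}<t_i<u_{t_i-i+1}$.

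Next I would deduce (2). Applying the same accounting to the segment $\{1,\ldots,u_j\}$ shows that the number of taken spots not exceeding $u_j$ equals $u_j-j$. Thus the inequality $u_j\ge j+i$ is equivalent to $u_j-j\ge i$, i.e.\ to there being at least $i$ taken spots below $u_j$, which (since the taken spots in increasing order are $t_1<t_2<\cdots$) is equivalent to $t_i<u_j$. Finally, part (1) together with the strict monotonicity of $\uuu$ gives $t_i<u_j \iff j\ge t_i-i+1$, because $u_{t_i-i}<t_i<u_{t_i-i+1}$ pins down exactly which indices $j$ satisfy $u_j>t_i$ (and $u_j\ne t_i$ always, as $\ttt$ and $\uuu$ are disjoint). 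Chaining these equivalences yields $u_j\ge j+i \iff j\ge t_i-i+1$, as desired.

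The only real subtlety — and the step I would be most careful about — is the boundary behavior, where the indices $t_i-i$ or $t_i-i+1$ fall outside the valid subscript range $1,\ldots,n-m$ for $\uuu$. For instance $t_i-i=0$ occurs exactly when the first $i$ spots are all taken (so there is no $u_{t_i-i}$), and $t_i-i+1=n-m+1$ occurs when $t_i=n$. I would handle these uniformly by adopting the conventions $u_0=0$ and $u_{n-m+1}=n+1$, under which both boundary inequalities in (1) hold trivially and the equivalence in (2) remains valid; alternatively one can simply read the out-of-range inequalities as vacuous. No other obstacle arises, as the argument is purely combinatorial bookkeeping.
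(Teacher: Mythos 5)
Your proof is correct and takes essentially the same approach as the paper's: the identical count of unoccupied spots in $\{1,\ldots,t_i\}$ establishes (1), and (2) then follows from (1) by monotonicity, where your counting of taken spots at most $u_j$ (giving $u_j-j$) is just a restatement of the paper's observation that the sequence $(u_1-1,u_2-2,\dots)$ is weakly increasing. Your explicit treatment of the boundary cases via the conventions $u_0=0$ and $u_{n-m+1}=n+1$ is a small point of care that the paper leaves implicit.
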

\begin{proof}
We know that $t_i-i$ is the number of unoccupied spots before $t_i$.  So the $(t_i-i)$-th unoccupied spot will occur to the left of $t_i$, and the $(t_i-i+1)$-th unoccupied spot will occur to the right of $t_i$.  This establishes assertion~(1).

Assertion~(1) can be restated as $u_{t_i-i}-(t_i-i)<i$ and $u_{t_i-i+1}-(t_i-i+1)\geq i$.  Since the sequence $(u_1-1,u_2-2,\dots)$ is weakly increasing, it follows that $u_{j}-j\geq i$ if and only if $j\ge t_i-i+1$.  This establishes assertion~(2).
\end{proof}

We want to relate an increasing parking completion with taken spots~$\ttt$ with a list $\AAA=(\aaa^{(1)},\dots)$ of increasing parking functions.  Informally, the idea is to ``fill in'' the gaps between the taken spots (if too many cars are trying to squeeze into one gap, they can move to the next).  The first $i$ increasing parking functions should park $L_i$ cars to fill at least the first $i$ gaps (and possibly additional spots), which translates into the condition $L_i\ge t_i-i$.  We formalize this with the following definition.

\begin{definition}
Let $\ttt=(t_1,\ldots,t_m)$ with $t_m\le n$, $\AAA=(\aaa^{(1)},\dots,\aaa^{(m+1)})\in \IPF^*$, and let $L_i=|\aaa^{(1)}|+\cdots+|\aaa^{(i)}|$.  Then $\AAA$ is \emph{$(n,\ttt)$-compatible} if $L_i\ge t_i-i$ for all $i\in [m]$ and $L_{m+1}=n-m$.  We write $\IPF^*_{n,\ttt}$ for the set of all $(n,\ttt)$-compatible elements of $\IPF^*$.
\end{definition}

\begin{example}
Let $\AAA=((1,1,2,2),(1),(1),())$.  Then $\AAA\in\IPF^*_{9,(3,6,7)}$ and $\AAA\in\IPF^*_{10,(2,5,7,9)}$, but $\AAA\not\in\IPF^*_{9,(6,7,8)}$.
\end{example}

We now establish the connection between $\IPF^*_{n,\ttt}$ and $\IPC$.

\begin{theorem}\label{thm:ipf-ipc}
Let $\ttt=(t_1,\dots,t_m)$ with $t_m\leq n$.  Then we have
$\SPLIT(\IPC_n(\ttt))=\IPF^*_{n,\ttt}$ and $\JOIN(\IPF^*_{n,\ttt})=\IPC_n(\ttt)$.
\end{theorem}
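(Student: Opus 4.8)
The plan is to exploit the fact, just established, that $\JOIN$ and $\SPLIT$ are mutually inverse bijections. Because of this, the two displayed equalities are equivalent to one another, and each follows once we prove the two inclusions $\JOIN(\IPF^*_{n,\ttt}) \subseteq \IPC_n(\ttt)$ and $\SPLIT(\IPC_n(\ttt)) \subseteq \IPF^*_{n,\ttt}$. Indeed, applying the bijection $\SPLIT$ to the first inclusion gives $\IPF^*_{n,\ttt} \subseteq \SPLIT(\IPC_n(\ttt))$, which together with the second inclusion yields $\SPLIT(\IPC_n(\ttt)) = \IPF^*_{n,\ttt}$; applying $\JOIN$ then produces the companion equality. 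The engine driving both inclusions is Lemma~\ref{u-inequalities}(2), which translates the compatibility inequalities $L_j \ge t_j - j$ into the lattice-path inequalities $u_p \ge p + j$ and back.

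For the forward inclusion I would take $\AAA = (\aaa^{(1)},\dots,\aaa^{(m+1)}) \in \IPF^*_{n,\ttt}$ and write out $\ccc = \JOIN(\AAA) = (c_1,\dots,c_{n-m})$ using Definition~\ref{defn:Join}. The entries coming from $\aaa^{(i)}$ occupy positions $p = L_{i-1}+1,\dots,L_i$, and since each $\aaa^{(i)}$ is an increasing parking function its $k$-th entry is at most $k$; this gives the uniform upper bound $c_p \le p + (i-1)$ for every position $p$ in block $i$. On the other side I would show the matching lower bound $u_p \ge p + (i-1)$ for these same positions: for $i \ge 2$ this follows from $p \ge L_{i-1}+1 \ge (t_{i-1}-(i-1))+1$ together with Lemma~\ref{u-inequalities}(2), while for $i=1$ it is the trivial bound $u_p \ge p$. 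Combining the two gives $c_p \le u_p$ for all $p$, which—together with $|\ccc| = L_{m+1} = n-m$ and the fact (noted after Definition~\ref{defn:Join}) that $\ccc$ is weakly increasing—is exactly the condition for $\ccc \in \IPC_n(\ttt)$.

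For the reverse inclusion I would take $\ppp = (c_1,\dots,c_{n-m}) \in \IPC_n(\ttt)$ and analyze $\SPLIT(\ppp) = (\aaa^{(1)},\dots,\aaa^{(r)})$, tracking that the $j$-th recursive step reads off positions $L_{j-1}+1,\dots,L_j$ after subtracting the cumulative offset $L_{j-1}+(j-1)$. The key observation is that a genuine block boundary after block $j$ (that is, $L_j < n-m$) forces a violation $c_{L_j+1} > L_j + j$; since $c_{L_j+1} \le u_{L_j+1}$, this yields $u_{L_j+1} \ge (L_j+1)+j$, and Lemma~\ref{u-inequalities}(2) then delivers both $L_j \ge t_j - j$ and, because $u_p - p \le m$ always (there being only $m$ taken spots), the bound $j \le m$. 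Hence there are at most $m$ boundaries, so $r \le m+1$ and the compatibility inequalities hold at every boundary. I would finish by padding $\SPLIT(\ppp)$ with empty blocks up to length $m+1$ (which does not change $\JOIN$), noting that for the remaining indices $j \ge r$ one has $L_j = n-m \ge t_j - j$ because $t_j \le n-m+j$, so that all of $L_1 \ge t_1-1,\dots,L_m \ge t_m-m$ and $L_{m+1}=n-m$ are satisfied, giving $\SPLIT(\ppp) \in \IPF^*_{n,\ttt}$.

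I expect the reverse inclusion to be the main obstacle. The forward direction is a clean two-sided estimate, whereas the reverse requires correctly bookkeeping the coordinate shifts in the recursive $\SPLIT$, proving that the number of blocks cannot exceed $m+1$, and separately handling the ``padding'' indices $j \ge r$ where there is no violation to exploit. The uniform bound $u_p - p \le m$ is the crucial input that prevents $\SPLIT$ from producing too many blocks, and establishing it cleanly—rather than chasing individual inequalities—is where I would concentrate the care.
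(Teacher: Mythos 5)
Your proposal is correct, and its forward inclusion is exactly the paper's argument: a point $c_k$ arising from block $i$ satisfies $c_k\le k+(i-1)$ because each block is an increasing parking function, while $(n,\ttt)$-compatibility plus Lemma~\ref{u-inequalities}(2) gives $u_k\ge k+(i-1)$, whence $c_k\le u_k$. Where you genuinely diverge is the reverse inclusion. The paper argues by contradiction using the parking semantics: assuming $L_i+i<t_i$, it notes that the first entry of the next block forces $p_{L_i+1}=L_i+i+1$, so the cars after position $L_i$ cannot park in $[L_i+i]$, and a count of open spots versus available cars ($L_i+1$ spots but only $L_i$ cars) yields the contradiction. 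You instead run the implication directly: maximality of each $\SPLIT$ prefix forces the violation $c_{L_j+1}\ge L_j+j+1$ at every genuine block boundary, and combining this with $c_{L_j+1}\le u_{L_j+1}$ and the \emph{if and only if} in Lemma~\ref{u-inequalities}(2) immediately gives $L_j\ge t_j-j$. Your route buys three things the paper leaves implicit or slightly rough: the uniform bound $u_p\le p+m$ proves $\SPLIT$ produces at most $m+1$ blocks (the paper simply writes $\SPLIT(\ppp)=(\aaa^{(1)},\dots,\aaa^{(m+1)})$ without justification); your violation bound comes from maximality of $\aaa^{(j)}$ rather than from the paper's claim that ``the first term of $\aaa^{(i+1)}$ is $1$,'' which as literally stated fails when $\aaa^{(i+1)}$ is empty; and you explicitly check the padding indices $j\ge r$ via $t_j\le n-m+j$. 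What the paper's counting argument buys in exchange is conceptual transparency—it explains \emph{why} compatibility must hold in terms of cars filling gaps—at the cost of these bookkeeping details. One small phrasing correction: the bound $j\le m$ comes from $u_p-p\le m$ alone and must be established \emph{before} invoking Lemma~\ref{u-inequalities}(2) (otherwise $t_j$ is undefined); you have the right ingredient but state it as if the lemma delivers both conclusions at once.
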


\begin{proof}
First, given $\AAA=(\aaa^{(1)},\dots,\aaa^{(m+1)})\in\IPF^*_{(n,\ttt)}$, we need to show that $\Join(\AAA)=(c_1,\ldots,c_{n-m})\in\IPC_n(\ttt)$.  It is equivalent to show that $c_k\le u_k$ for all $k\in[n-m]$.  Suppose that $c_k$ arises from $\aaa^{(i+1)}_j$ (the $j$-th entry of $\aaa^{(i+1)}$). Then $k=L_{i}+j$ and $c_k=L_{i}+i+\aaa^{(i+1)}_j$.  Since $\aaa^{(i+1)}$ is an increasing parking function, it follows that $\aaa^{(i+1)}_j\le j$.  So we have $c_k\le L_i+i+j=k+i\le u_k$. The last inequality follows from Lemma~\ref{u-inequalities}(2) since $k=L_i+j\ge (t_i-i)+j\ge t_i-i+1$ and $\AAA$ is $(n,\ttt)$-compatible.

Second, given $\ppp=(p_1,\dots,p_{n-m})\in\IPC_n(\ttt)$, we need to show that $\AAA=(\aaa^{(1)},\ldots,\aaa^{(m+1)})=\SPLIT(\ppp)\in\IPF^*_{(n,\ttt)}$.  Since $\AAA\in\IPF^*$ we need to show that $L_{m+1}=n-m$ (which follows by noting that $|\AAA|=|\ppp|=n-m$) and that $L_i\ge t_i-i$ for $i\in[m]$.  Suppose, for the sake of contradiction, that $L_i+i<\ttt_i$ for some $i$.  The first term in $\aaa^{(i+1)}$ will always be $1$.  Looking at the term in $\ppp$ that the first term produces we can conclude $p_{L_i+1}=L_i+i+1$.  Since $\ppp$ will be a weakly increasing sequence, all other terms from that point forward in the output from $\JOIN$ will be at least as large; showing that none of those terms can park in the spots at or before $L_i+i$.  That leaves $L_i$ cars to fill the open spots which are in $[L+i]$.  This is impossible since there are at most $(i-1)$ taken spots which have occurred on or before $L_i+i$ and so at least $L_i+1$ spots needing to be filled.
\end{proof}

\subsection{Enumeration Formula for Increasing Parking Completions (Proof of Theorem~\ref{count-IPC})}
By Theorem~\ref{thm:ipf-ipc} there is a bijection between $\IPC_n(\ttt)$ and $\IPF^*_{n,\ttt}$. To enumerate $\IPF^*_{n,\ttt}$, we group the elements of $\IPF^*_{n,\ttt}$ according to the sequence $(|\aaa^{(1)}|,\ldots,|\aaa^{(m+1)}|)=(\ell_1,\ldots,\ell_{m+1})$; note that in all cases $\ell_1+\cdots+\ell_j\ge t_j-j$.  Each $\aaa^{(i)}$ corresponds to an increasing parking function of length $i$, for which there are $C_{\ell_i}=\frac1{\ell_i+1}\binom{2\ell_i}{\ell_i}$ possibilities.  Putting this all together we have
\[
\big|\IPC_n(\ttt)\big| = \big|\IPF_{n,\ttt}^\ast\big|= \sum_{\ell\in L_n(\ttt)} \prod_{i=1}^{m+1} \frac{1}{\ell_i+1} \binom{2\ell_i}{\ell_i},
\]
where
\[
L_n(\ttt)= \left\{\bl=(\ell_1,\dots,\ell_{m+1})\in\Nn^{m+1}\,\middle\vert
\begin{array}{l}
\ell_1+\cdots+\ell_j \geq t_j-j \text{ for all } j\in[m], \text{ and}\\
 \ell_1+\cdots+\ell_{m+1}=n-m 
 \end{array} \right\}.
 \]

\subsection{Enumeration Formula for Parking Completions (Proof of Theorem \ref{thm:main2})}\label{sec:proof:main2}

We now enumerate ordinary (i.e., not necessarily increasing) parking completions.  In what follows, hatted symbols (such as $\hat{\aaa}$) indicate sequences not required to be in increasing order; unhatted symbols (such as $\aaa$) indicate weakly increasing sequences.  Given $\widehat{\ppp}\in \PC_n(\ttt)$, let $\ppp\in \IPC_n(\ttt)$ be its (unique) weakly increasing rearrangement.  Let $\Split(\ppp)=(\aaa^{(1)},\dots,\aaa^{(m+1)})\in\IPF^*$, so that
\[
\ppp=\JOIN(\SPLIT(\ppp))=\bbb^{(1)}\concat\cdots\concat\bbb^{(m+1)}
\]
where $\bbb^{(i)}=\aaa^{(i)}\oplus(L_{i-1}+(i-1))$ and $L_i=\ell_1+\cdots+\ell_i$ with $\ell_i=|\aaa^{(i)}|$.  Moreover, the sequences $\bbb^{(j)}$ are ``setwise increasing'': if $i<j$, then every entry in $\bbb^{(i)}$ is less than every entry in $\bbb^{(j)}$.  Therefore, given $\bl=(\ell_1,\dots,\ell_{m+1})$, the $\bbb^{(i)}$, and therefore the $\aaa^{(i)}$, can be recovered from any shuffle of $\ppp$.

We now classify (increasing) parking completions by the length sequences of their splits.  For $\bl=(\ell_1,\dots,\ell_{m+1})$, define
\[
\widehat{\mathcal{S}}_{n,\ttt}(\bl)=\big\{
(\widehat\aaa^{(1)}\oplus(L_0+0))\concat\cdots\concat(\widehat\aaa^{(m+1)}\oplus(L_m+m)) \mid 
\widehat\aaa^{(i)}\in\PF\text{ and }|\widehat\aaa^{(i)}|=\ell_i\text{ for all $i$}\big\}.
\]
In particular,
\[
|\widehat{\mathcal{S}}_{n,\ttt}(\bl)|=\prod_{j=1}^{m+1}(\ell_j+1)^{(\ell_j-1)}.
\]

Now consider the set $\Shuffle(\widehat{\mathcal{S}}_{n,\ttt}(\bl))$ (see Definition~\ref{defn:shuffle}).  As noted above, the $m+1$ sequences $\aaa^{(i)}\oplus(L_{i-1}+(i-1))$ can be recovered individually from any shuffle of them.  Therefore, equality holds in~\eqref{shuffle-inequality}:
\[
|\Shuffle(\widehat{\mathcal{S}}_{n,\ttt}(\bl))|=\binom{n-m}{\bl}|\widehat{\mathcal{S}}_{n,\ttt}(\bl)|=\binom{n-m}{\bl}\prod_{j=1}^{m+1}(\ell_j+1)^{(\ell_j-1)}.
\]
Summing over all possible $\bl$ gives the final formula for the number of parking completions:
\[
\big|\pco{n}{\ttt}\big|=\sum_{\bl\in L_n(\ttt)} \binom{n-m}{\bl}\prod_{j=1}^{m+1}(\ell_j+1)^{(\ell_j-1)},
\]
where
\[L_n(\ttt)= \left\{\bl=(\ell_1,\dots,\ell_{m+1})\in\Nn^{m+1}\,\middle\vert
\begin{array}{l}
\ell_1+\cdots+\ell_j \geq t_j-j \text{ for all } j\in[m], \text{ and}\\
\ell_1+\cdots+\ell_{m+1}=n-m 
\end{array} \right\}.\]

\subsection{Parking Completions of a Block (Proof of Corollary~\ref{thm:main1})}
We now specialize to the case that the taken spots consist of a contiguous block, i.e., $\ttt=(i+1,\ldots,i+m)$.  Parking completions with a single spot taken ($m=1$, $i$ arbitrary) were enumerated by Diaconis and Hicks~\cite[Corollary 1]{diaconis}, who showed (in our notation) that
\begin{equation} \label{eqn:DH}
\big|\pco{n}{(k)}\big|=\sum_{s=0}^{n-k}\binom{n-1}{s}(s+1)^{s-1}(n-s)^{n-s-2}.
\end{equation}
The case $i=0$ with arbitrary $m$ was first considered by Yan~\cite{yan-GPF} under the name of \emph{$c$-parking functions}.  Gessel and Seo~\cite[\S10]{GesselSeo} showed that for $\ell\leq n$,
\begin{equation} \label{eqn:EH}
\big|\pco{n}{(1,\ldots,\ell)}\big|=(\ell+1)(n+1)^{n-\ell-1}.
\end{equation}
This formula was generalized by Ehrenborg and Happ~\cite[Theorem 1.2]{ehrenborg2} to the case of cars of different sizes.  Our Corollary~\ref{thm:main1}, which we now prove, covers the case that both $i$ and $m$ are arbitrary, generalizing both~\eqref{eqn:DH} and~\eqref{eqn:EH}.

Let $\widehat\ccc\in\pco{n}{(i+1,\ldots,i+m)}$.  Then as in the discussion in \S\ref{sec:proof:main2} we have
\[
\widehat\ccc=\widehat{\aaa}\concat(\widehat\ppp\oplus(|\widehat{\aaa}|+1)),
\]
where $\widehat\aaa$ is a parking function and $\widehat\ppp$ is a parking completion.  For the remainder of our discussion we set $k=|\widehat\aaa|$.

Because $\widehat{\ccc}$ contains a violation immediately after $\widehat\aaa$, it \emph{must} be the case that the first $k$ cars (those whose preferences occur in $\widehat{\aaa}$) fill at least the first $i$ spots (and possibly more).  Hence $i\le k\le n-m$ (the upper bound comes when $\widehat\aaa$ would park in all available spaces).  For a fixed choice of $k$ there are $(k+1)^{k-1}$ possible choices for the parking function $\widehat\aaa$.

After using the preferences from $\widehat\aaa$, the first $k+m$ spots will be filled.  This means that $\widehat\ppp$ is a parking completion of $[n-k-1]$ where the first $m-1$ spots are already taken (after appropriate shifting).  For a fixed $k$, formula~\eqref{eqn:EH} implies that there are $m(n-k)^{n-k-m-1}$ such parking completions.

We now have counts for $\widehat\aaa$ and $\widehat\ppp$ for a particular choice of $k$.  The last thing to note is that we can combine them together by shuffling, and since the entries are distinct this introduces the binomial coefficient $\binom{n-m}{k}$.  So adding over all possible choices of $k$ we have
\[
|\pco{n}{(i+1,\ldots,i+m)}\big|=
\sum_{k=i}^{n-m}\binom{n-m}{k}(k+1)^{k-1}m(n-k)^{n-k-m-1}.
\]
This establishes Corollary~\ref{thm:main1}.

If we now sum over all of the possible $\ttt$ consisting of $m$ continuous entries, the result simplifies nicely.

\begin{proposition}\label{prop:all_shifts}
Let $1\le m\le n$.  Then
$\displaystyle\sum_{i=0}^{n-m}|\pco{n}{(i+1,\ldots,i+m)}\big| = (n+1)^{n-m}$.
\end{proposition}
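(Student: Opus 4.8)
The plan is to start from the closed form in Corollary~\ref{thm:main1} and simply sum it over $i$, interchanging the two summations so that the dependence on $i$ collapses into an elementary factor; the resulting single sum should then be a classical Abel-type binomial identity. Concretely, Corollary~\ref{thm:main1} gives
\[
\sum_{i=0}^{n-m}\big|\pco{n}{(i+1,\ldots,i+m)}\big|
=\sum_{i=0}^{n-m}\sum_{k=i}^{n-m}\binom{n-m}{k}(k+1)^{k-1}m(n-k)^{n-k-m-1}.
\]
Since the summand depends on $i$ only through the constraint $i\le k$, I would swap the order of summation: for each fixed $k\in\{0,\ldots,n-m\}$ the index $i$ ranges over $0\le i\le k$, contributing a factor of $k+1$. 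This turns $(k+1)^{k-1}$ into $(k+1)^{k}$ and leaves the single sum
\[
\sum_{k=0}^{n-m}\binom{n-m}{k}(k+1)^{k}m(n-k)^{n-k-m-1}.
\]

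Next I would set $N=n-m$ and reindex by $j=N-k$ (using $\binom{N}{k}=\binom{N}{j}$, $(k+1)^k=(N-j+1)^{N-j}$, and $n-k=m+j$ with $n-k-m-1=j-1$), bringing the sum into the shape
\[
\sum_{j=0}^{N}\binom{N}{j}\,m\,(m+j)^{j-1}(N+1-j)^{N-j}.
\]
The goal is then to recognize this as the specialization $x=m$, $y=1$ of the Abel--Hurwitz binomial identity
\[
\sum_{j=0}^{N}\binom{N}{j}x(x+j)^{j-1}(y+N-j)^{N-j}=(x+y+N)^{N},
\]
which yields $(m+1+N)^{N}=(n+1)^{n-m}$, exactly the claimed value. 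I would cite a standard reference (e.g.\ Riordan's treatment of Abel identities) rather than reprove it; as a fallback one can verify it by induction on $N$ or via the usual functional-equation argument for forest generating functions.

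The interchange of summation is routine; the step needing genuine care --- and the one I expect to be the main obstacle --- is matching the simplified sum to the precise form of the Abel identity, including the boundary conventions. The factors $(k+1)^{k-1}$ at $k=0$ and $(n-k)^{n-k-m-1}$ at $k=n-m$ involve the reciprocal powers $1^{-1}$ and $m^{-1}$, which must be read (as in the derivation of Corollary~\ref{thm:main1}, and as in Abel's identity itself) through the convention that $x(x+j)^{j-1}=1$ when $j=0$; I would check these edge terms against small cases such as $m=1$, $n=2,3$ to confirm that the exponent bookkeeping and the empty-completion conventions line up. As an alternative to the algebraic route, I would note that a Pollak-style circular argument looks promising, since $(n+1)^{n-m}$ counts all preference sequences of $n-m$ cars on a cycle of $n+1$ spots; cutting the cycle should biject these with $(\text{block position},\text{completion})$ pairs, but making the cut canonical so that the $m$ taken spots become consecutive is precisely where that approach would demand the most care, so I would keep the Abel computation as the primary proof.
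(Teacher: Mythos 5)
Your computation is correct, but it takes a genuinely different route from the paper. The paper proves Proposition~\ref{prop:all_shifts} by exactly the Pollak-style circular argument you set aside as risky: place spots $1,\ldots,n$ on a circle together with a new spot $0$, choose the block $\ttt$ of $m$ consecutive taken spots in $n+1$ ways, and let the remaining $n-m$ cars choose preferences among all $n+1$ spots, driving with wraparound so that every car parks and exactly one spot stays open. Rotational symmetry of the whole ensemble (block placement included) makes each spot open in exactly $(n+1)^{n-m+1}/(n+1)=(n+1)^{n-m}$ outcomes, and the outcomes with spot $0$ open are precisely the pairs (block position $i$, parking completion of $(i+1,\ldots,i+m)$). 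The ``canonical cut'' difficulty you predicted simply dissolves: since the block position is part of the data being summed over, the $n-m+1$ values of $i$ are exactly the placements of the block avoiding spot $0$, so no choice of cut is needed. Your route --- interchange of summation contributing the factor $k+1$, reindexing by $j=N-k$, and the Abel--Hurwitz identity
\[
\sum_{j=0}^{N}\binom{N}{j}\,x(x+j)^{j-1}(y+N-j)^{N-j}=(x+y+N)^{N}
\]
at $x=m$, $y=1$ --- is sound, including the boundary terms, where $m\cdot m^{-1}=1$ is a literal rational identity (for $m\ge 1$) rather than a convention. What each approach buys: the paper's proof is shorter, self-contained, and bijective; yours exposes the statement as an avatar of Abel's binomial theorem, and read backwards it shows that Proposition~\ref{prop:all_shifts} together with Abel's identity would re-derive the tree-counting Lemma preceding it.

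One small repair is needed before your argument is airtight: Corollary~\ref{thm:main1} is stated only for $1\le i\le n-m$, yet your interchange uses its formula at $i=0$ as well. The formula does hold there, but you should justify it rather than rely on small-case checks: the derivation of Corollary~\ref{thm:main1} from Theorem~\ref{thm:main2} at the end of Section~4 goes through verbatim for $\ttt=(1,\ldots,m)$, since then $t_j-j=0$ and the constraints defining $L_n(\ttt)$ reduce to $\ell_1=k\ge 0$; alternatively, the $i=0$ instance of the formula equals $(m+1)(n+1)^{n-m-1}$, the Gessel--Seo value~\eqref{eqn:EH}, by the two-sided (Cauchy--Abel) companion identity $\sum_{j}\binom{N}{j}x(x+j)^{j-1}y(y+N-j)^{N-j-1}=(x+y)(x+y+N)^{N-1}$. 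With that one sentence added, your proof is complete.
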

\begin{proof}
We will do a variation of the classic proof that there are $(n+1)^{n-1}$ parking functions.  

Place the spots $1,\ldots,n$ in a circle and add a new spot labeled $0$.  Each car drives to its preferred spot and then travels, with wraparound, until it parks.  Therefore, in this setting \emph{all cars park}.  We first select $\ttt$ which can be done in $n+1$ ways, and this fixes the first $m$ locations of the cars. The remaining $n-m$ cars now select an arbitrary location and park, this can be done in $(n+1)^{n-m}$ ways.  Finally we note that in each case exactly one spot remains open, and by shifting all preferences each spot is equally likely to be open over all combinations (so each spot has probability $1/(n+1)$ of being open).  A parking completion corresponds to when $0$ is the open spot and this happens $(n+1)^{n-m}$ times.
\end{proof}

It is possible to more directly derive the result of Corollary~\ref{thm:main1} from Theorem~\ref{thm:main2}. Observe that setting $\ell_1=k$ and $\ttt=(i+1,\dots,i+m)$, we can rewrite Theorem~\ref{thm:main2} as
\[
\big|\pco{n}{\ttt}\big|=\sum_{k=i}^{n-m}\binom{n-m}{k} (k+1)^{k-1} \sum_{(\ell_2,\dots,\ell_{m+1})\compn n-m-k} \binom{n-m-k}{\ell_2,\dots,\ell_{m+1}} \prod_{j=2}^{m+1}(\ell_j+1)^{(\ell_j-1)}.
\]
Therefore, Corollary~\ref{thm:main1} will follow from the following lemma, which gives a combinatorial interpretation of the inner sum.

\begin{lemma}
For all fixed $k$ satisfying $1\le k\le n-m$, we have
\[
\sum_{(\ell_2,\ldots,\ell_{m+1})\vDash n-m-k}\binom{n-m-k}{\ell_2,\ldots ,\ell_{m+1}}\prod_{j=2}^{m+1}(\ell_j+1)^{\ell_j-1} = m(n-k)^{n-k-m-1}
\]
where $(\ell_2,\ldots,\ell_{m+1})\vDash n-m-k$ is a composition of $n-m-k$.
\end{lemma}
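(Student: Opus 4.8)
The plan is to prove both sides count the same family of rooted forests, via the reading of Cayley's formula that $(\ell+1)^{\ell-1}$ is the number of labeled trees on $\ell+1$ vertices. First I would make the change of variables $N:=n-m-k$, so that $n-k=N+m$ and $n-k-m-1=N-1$; the target identity becomes
\[
\sum_{(\ell_2,\dots,\ell_{m+1})\vDash N}\binom{N}{\ell_2,\dots,\ell_{m+1}}\prod_{j=2}^{m+1}(\ell_j+1)^{\ell_j-1} = m(N+m)^{N-1},
\]
a sum over weak compositions of $N$ into exactly $m$ nonnegative parts $\ell_2,\dots,\ell_{m+1}$. This reformulation makes the combinatorial content transparent: the right-hand side is the classical count of rooted forests with a prescribed set of roots.

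Next I would set up the bijective interpretation. Fix a set $\{r_2,\dots,r_{m+1}\}$ of $m$ ``root'' vertices together with $N$ additional vertices labeled by $[N]$, and count forests consisting of exactly $m$ rooted trees in which $r_2,\dots,r_{m+1}$ lie in distinct trees and serve as their roots. To build such a forest one first assigns each of the $N$ non-root vertices to one of the $m$ trees---equivalently, chooses an ordered set partition of $[N]$ into blocks $B_2,\dots,B_{m+1}$ of sizes $\ell_2,\dots,\ell_{m+1}$---and then, on each vertex set $B_j\cup\{r_j\}$ of size $\ell_j+1$, chooses a tree. By Cayley's formula the number of labeled trees on $\ell_j+1$ vertices is $(\ell_j+1)^{\ell_j-1}$, and declaring $r_j$ to be the root does not change this count (so empty blocks contribute the single one-vertex tree, consistent with $(0+1)^{-1}=1$). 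Summing the product of these tree counts over all size sequences, with $\binom{N}{\ell_2,\dots,\ell_{m+1}}$ recording the number of ordered partitions of prescribed sizes, recovers exactly the left-hand side.

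It then remains to evaluate the number of such forests directly, and this is the crux. I would invoke the generalized Cayley formula: the number of forests of rooted trees on $p$ labeled vertices with exactly $m$ trees, in which $m$ prescribed vertices are required to lie in distinct trees as the roots, is $m\,p^{\,p-m-1}$. Taking $p=N+m$ gives $m(N+m)^{N-1}$, matching the right-hand side and completing the argument. The main obstacle is justifying this forest count; I would either cite it (it is standard---a consequence of the Matrix--Tree Theorem or of a Pr\"ufer-type encoding) or, if a self-contained argument is preferred, prove it by exponential generating functions: writing $A(x)=\sum_{\ell\ge0}(\ell+1)^{\ell-1}x^\ell/\ell!$, the left-hand side equals $N!\,[x^N]\,A(x)^m$, and since $A(x)=R(x)/x$ where $R(x)=\sum_{p\ge1}p^{\,p-1}x^p/p!$ is the rooted tree function satisfying $R=xe^{R}$, this becomes $N!\,[x^{N+m}]\,R(x)^m$, which Lagrange inversion evaluates to $m(N+m)^{N-1}$. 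Either route reduces the lemma to a known enumeration, with no essential difficulty beyond bookkeeping of the root vertices.
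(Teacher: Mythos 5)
Your proposal is correct, and its combinatorial core coincides with the paper's: the paper interprets the left-hand side by distributing the labels $\{m+2,\dots,n-k+1\}$ into $m$ buckets (accounting for the multinomial coefficient) and choosing a Cayley tree on each bucket together with its root $j\in\{2,\dots,m+1\}$ (accounting for the factors $(\ell_j+1)^{\ell_j-1}$) --- which is exactly your rooted forest with $m$ prescribed roots, except that the paper then adds one further vertex $1$ joined to all $m$ roots, converting the forest into a tree on $n-k+1$ vertices whose neighborhood of vertex $1$ is precisely $\{2,\dots,m+1\}$. The genuine divergence is in the final evaluation. The paper stays self-contained: it characterizes the Pr\"ufer codes (largest-leaf convention) of these trees, observing that the entries recorded while non-root vertices are being deleted never equal $1$ (giving $n-k$ choices each for $n-k-m-1$ entries), the entry at the last non-root deletion must lie in $\{2,\dots,m+1\}$ ($m$ choices), and the trailing entries are forced to be $1$, yielding $m(n-k)^{n-k-m-1}$ directly. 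You instead invoke the generalized Cayley formula $m\,p^{\,p-m-1}$ for forests on $p=N+m$ vertices with $m$ prescribed roots; this is standard, but its usual proof is itself a Pr\"ufer-type encoding, so the citation route essentially outsources the very computation the paper carries out. Your fallback derivation, however, is a genuinely different and fully rigorous route: the left side is visibly $N!\,[x^N]A(x)^m$ with $A(x)=R(x)/x$ and $R=xe^R$, and Lagrange inversion gives $[x^{N+m}]R(x)^m=\frac{m}{N+m}\cdot\frac{(N+m)^N}{N!}$, hence $m(N+m)^{N-1}$ after multiplying by $N!$, matching the target after your (correct) substitution $N=n-m-k$. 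What each approach buys: the paper's explicit Pr\"ufer count is elementary and needs no outside machinery, while your generating-function route makes the algebraic structure of the identity transparent and generalizes readily (e.g., to weighted or multi-type analogues), at the cost of importing Lagrange inversion. You also handle the degenerate cases correctly: empty blocks contribute $(0+1)^{-1}=1$, consistent with the paper's allowance of empty buckets, and at $N=0$ both sides equal $m\cdot m^{-1}=1$.
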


\begin{proof}
Fix $\ell_2,\ldots,\ell_{m+1}$.  The left-hand side of the desired equality is the number of trees on vertices $1,\dots,n-k+1$ that can be constructed by the following procedure.
\begin{enumerate}
\item Start with $m$ buckets labeled $2,\dots,m+1$.
\item Put each element of $\{m+2,\ldots,n-k+1\}$ into one of the buckets.  (The buckets are allowed to be empty.)  Let $B_i$ be the set of elements in the $i$-th bucket and let $\ell_i=|B_i|$.
\item For each $i\in\{2,\dots,m+1\}$, choose a tree $T_i$ with vertices $B_i\cup\{i\}$.
\item Add a new vertex 1 adjacent to each of $2,\dots,m+1$.
\end{enumerate}
Steps (1) and (4) involve no choice, while there are $\binom{n-m-k}{\ell_2,\ldots ,\ell_{m+1}}$ possibilities for step (2) and $\prod_{j=2}^{m+1}(\ell_j+1)^{\ell_j-1}$ possibilities for step (3).

The right-hand side counts the Pr\"{u}fer codes for labeled trees of the form described (where the Pr\"ufer code is constructed by iteratively deleting the \textit{largest} leaf vertex and recording its neighbor).  The last $m$ entries of the code correspond to deleting vertices $2,\dots,m+1$, and are all $1$. The $(m+1)$-th entry from the end must be one of $2,\dots,m+1$, so there are $m$ choices. The remaining $n-k-m-1$ entries can be any number except $1$, so there are $n-k$ choices for each of them. This gives $m(n-k)^{n-k-m-1}$ Pr\"{u}fer codes, finishing the proof.
\end{proof}

\section{Connections to Other Parking Function Variations}\label{sec:enumerative-connections}

\subsection{\texorpdfstring{$\uuu$}{u}-Parking Functions and Pitman-Stanley Polytopes}
Let $\uuu=(u_1,\ldots, u_N)\in\Nn^N$ with $u_1\leq\cdots\leq u_N$.  A \emph{$\uuu$-parking function} \cite[\S1.4.1]{yan} is a sequence $\widehat{\aaa}\in\Nn^N$ whose non-decreasing rearrangement $\aaa=(a_1,\ldots,a_N)$ satisfies $1\leq a_i\leq u_i$ for all $i\in[N]$.\footnote{In \cite{yan}, which uses zero-indexing, this condition is written as $0\leq a'_i<u_i$.}  The set of all $\uuu$-parking functions is denoted $\PF(\uuu)$.  Parking completions are a special case of $\uuu$-parking functions: if $u_1<\cdots<u_N\le n$ and $\ttt$ is the strictly increasing complement of $\uuu$ in $[n]$, then $\PF(\uuu)=\pco{n}{\ttt}$.  

Let $\xxx=\Delta\uuu=(u_1,u_2-u_1,\dots,u_N-u_{N-1})$.  Pitman and Stanley \cite[Theorem 1]{PitmanStanley}, \cite[Theorems 1.27 and 1.29]{yan} showed that the number of $\uuu$-parking functions is
\begin{equation} \label{StanleyPitmanFormula}
P_N(\xxx)=|\PF(\uuu)|=N!\sum_{\kkk\in K_N} \prod_{i=1}^N \frac{x_i^{k_i}}{k_i!} = \sum_{\kkk\in K_N}\binom{N}{\kkk} x_1^{k_1}\cdots x_N^{k_N}
\end{equation}
where $K_N$ is the set of \emph{balanced sequences of length $N$}, i.e.,
\[K_N=\left\{\kkk\in\Nn^N\,\middle\vert\,\sum_{i=1}^j k_i\geq j\text{ for all } j\in[N-1] \text{ and } \sum_{i=1}^N k_i=N\right\}.\]

The number $P_N(\xxx)/N!$ is the volume of the so-called \emph{Pitman-Stanley polytope}
\[
\Pi_N(\xxx) = \Big\{\yyy\in\Rr^N\mid y_i\geq 0 \text{ and } y_1+\cdots+y_i\leq x_1+\cdots+x_i \text{ for all } i\in[N]\Big\}.
\]

In addition, there is a determinantal formula for $|\PF(\uuu)|$~\cite[Theorem~1.25]{yan}, which can be obtained using the theory of Gon\v{c}arov polynomials:
\begin{equation} \label{det-upf}
|\PF(\uuu)| = \det\left(s_{ij}\right)_{i,j=1}^N, \quad\text{where}\quad
s_{ij}=\begin{cases}\dfrac{u_i^{j-i+1}}{(j-i+1)!} &\text{ if } j+i-1\geq 0,\\[10pt] 0 &\text{ otherwise.} \end{cases}
\end{equation}

The index set and summation formula in the Pitman-Stanley formula~\eqref{StanleyPitmanFormula} strongly resemble those of
Theorem~\ref{thm:main2}.  However, the following example suggests that there does not seem to be a simple translation between the two.

\begin{example} \label{compare-PS-and-our-formula}
Let $n=4$, $\uuu=(1,4)$, and $\ttt=(2,3)$.  Then
\[
\pco{n}{\ttt}=\PF(\uuu)=\{(1,1), (1,2), (1,3), (1,4), (2,1), (3,1), (4,1)\}.
\]
In Theorem~\ref{thm:main2}, the index set for the summation is
\begin{align*}
L_n(\ttt) &= \left\{\bl=(\ell_1,\ell_2,\ell_3)\in\Nn^3\mid\ell_1\geq 1,\ \ell_1+\ell_2\geq 1,\ \ell_1+\ell_2+\ell_3=2\right\}\\
&= \{(1,1,0),\ (1,0,1),\ (2,0,0)\},
\end{align*}
so Theorem~\ref{thm:main2} gives
\begin{align*}
|\pco{n}{\ttt}| &=
\binom{2}{1,1,0} 2^0 2^0 1^{-1} +
\binom{2}{1,0,1} 2^0 1^{-1} 1^{-1} +
\binom{2}{2,0,0} 3^1 1^{-1} 1^{-1}\\
&= 2+2+3 = 7.
\end{align*}

On the other hand, setting $N=|\uuu|=2$ and $\xxx=\Delta\uuu=(1,3)$, the balanced sequences of length~$N$ are
\begin{align*}
K_2 &= \left\{\kkk=(k_1,k_2)\in\Nn^2\mid k_1\geq 1,\ k_1+k_2=2\right\}\\
&= \{(1,1),\ (2,0)\},
\end{align*}
so the Pitman-Stanley formula~\eqref{StanleyPitmanFormula} gives
\begin{align*}
P_N(\xxx) =|\PF(\uuu)|
&= \binom{2}{1,1} 1^1 3^1 ~+~ \binom{2}{2,0} 1^2 3^0\\
&= 6+1 = 7.
\end{align*}
Neither of the compositions $6+1$ and $3+2+2$ of $|\PF(\uuu)|=7$ refines the other, making it unlikely that either~\eqref{StanleyPitmanFormula} or Theorem~\ref{thm:main2} can be obtained from the other.
\end{example}

\begin{question}
The summands in~\eqref{StanleyPitmanFormula} give the volumes of the pieces in a natural decomposition of the associated Pitman-Stanley polytope \cite[Theorems~9 and~18]{PitmanStanley}.  Does there exist an analogous decomposition realizing Theorem~\ref{thm:main2} geometrically?
\end{question}

Such a decomposition would be essentially different from that given by Pitman and Stanley, and would be of substantial value in understanding these polytopes.

\subsection{Signature Parking Functions}
Recall from Section~\ref{sec:geom-join-split} that $L(\ppp)$ denotes the lattice path with up-steps at $x$-coordinates given by the entries of a weakly increasing integer sequence $\ppp$.  If $\sss=(s_1,\dots,s_a)$ is a composition (i.e., an arbitrary sequence of positive integers), we define $\tilde L(\sss)=L(s_1,s_1+s_2-1,\dots,\sum_{i=1}^js_i-(j-1),\dots,)$.  As an example, the lattice path for $\sss=(3,4,4,2,1)$ is shown in Figure~\ref{fig:sigpath} in red.

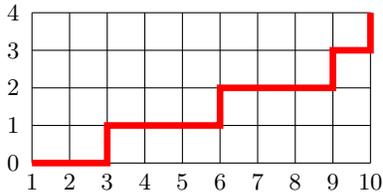
\begin{figure}[htb]
\centering
\begin{tikzpicture}[scale=0.5]
\foreach \x in {1,...,10} { \draw (\x,0)--(\x,4); \node at (\x,-.5) {\scriptsize\x}; }
\foreach \y in {0,...,4} { \draw (1,\y)--(10,\y); \node at (.5,\y) {\scriptsize\y}; }
\draw[line width = 2.5pt, red] (1,0)--(3,0)--(3,1)--(6,1)--(6,2)--(9,2)--(9,3)--(10,3)--(10,4);
\end{tikzpicture}

\caption{The lattice path corresponding to the composition $\sss=(3,4,4,2,1)$.}
\label{fig:sigpath}
\end{figure}

\textit{Signature Dyck paths} were introduced by Ceballos and Gonz\'alez~D'Le\'{o}n~\cite{CeballosGonzalez}
as a generalization of \emph{rational Dyck paths} (see, e.g., \cite{ALW,ARW,GMV1,GMV2}).  We have modified their definition slightly by expressing the signature as a lattice path, but it is simple to translate between the two settings.

\begin{definition}\cite[Defn.~3.2]{CeballosGonzalez}
A \emph{Dyck path with signature $\sss$}, or for short an \emph{$\sss$-Dyck path}, is a lattice path $D$ consisting of right and up steps starting at $(1,1)$ and ending at $(b,a)=(|\sss|-\ell(\sss)+1,\ell(\sss))$ which is weakly above and to the left of the lattice path $\tilde{L}(\sss)$.  The collection of all Dyck paths with signature $\sss$ is denoted $\mathcal{DP}_\sss$.
\end{definition}

A \emph{signature parking function (with signature $\sss$)} then corresponds to a labeling of the up steps of an $\sss$-Dyck path, just as in the correspondence between parking functions and decorated lattice paths (see Figure~\ref{fig:figure4}).
  
Given a signature $\sss$, define a partition $\lambda=\lambda(\sss) = (s_1-1,s_1+s_2-2,\ldots,s_1+\cdots+s_a-a)$ (with parts listed in weakly increasing order).  Then the lattice paths contained in the Ferrers diagram of~$\lambda$ are precisely those in $\mathcal{DP}_\sss$.  In general, signature Dyck paths with mild restrictions on the entries and increasing parking completions are equivalent.

\begin{proposition}\label{dyck-signature-equivalence}
Let $\sss=(s_1,\ldots,s_a)$ be a composition with $s_1,s_a\ge 1$ and $s_i\ge 2$ for all $1<i<a$.  Let $n = 1+s_1+\cdots+s_a-a$, let $\uuu = (s_1,s_1+s_2-1,\ldots,s_1+\cdots+s_{a-1}-(a-2))$, and let $\ttt$ consist of the ordered list of the elements in $[n]$ not in $\uuu$.  Then $|\mathcal{DP}_{\sss}|=\IPC_n(\ttt)$.
\end{proposition}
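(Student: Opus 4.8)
The plan is to build a bijection directly between $\mathcal{DP}_\sss$ and $\IPC_n(\ttt)$ at the level of up-step sequences, so that I never have to actually enumerate lattice paths in a Ferrers diagram. First I would record the shape data: an $\sss$-Dyck path runs from $(1,1)$ to $(b,a)$ with $a=\ell(\sss)$ and $b=|\sss|-\ell(\sss)+1$, and since $n=1+|\sss|-a$ we get $b=n$. Hence every $D\in\mathcal{DP}_\sss$ has exactly $a-1$ up steps and $n-1$ right steps. Using the standard encoding from Section~\ref{subsec:lattice-path}, I would represent $D$ by the weakly increasing sequence $\ccc=(c_1,\dots,c_{a-1})$ of the $x$-coordinates of its up steps.

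The crux is translating the condition that $D$ lies weakly above and to the left of $\tilde L(\sss)$. Because $D$ and $\tilde L(\sss)$ are monotone lattice paths with the same endpoints and the same number of up steps, $D$ lies weakly above and to the left of $\tilde L(\sss)$ if and only if, for every $j$, the $j$-th up step of $D$ occurs weakly to the left of the $j$-th up step of $\tilde L(\sss)$. By the definition of $\tilde L(\sss)$, its $j$-th up step has $x$-coordinate $\sum_{i=1}^j s_i-(j-1)$, which is exactly the $j$-th entry $u_j$ of $\uuu$. Thus $D\in\mathcal{DP}_\sss$ corresponds precisely to a weakly increasing sequence $(c_1,\dots,c_{a-1})$ satisfying $c_j\le u_j$ for all $j\in[a-1]$.

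Next I would verify that $\uuu$ really is the list of unoccupied spots, i.e.\ that $\uuu=(u_1,\dots,u_{a-1})$ is strictly increasing with $1\le u_1$ and $u_{a-1}\le n$. The consecutive differences are $u_{j+1}-u_j=s_{j+1}-1$, which is positive precisely because $s_i\ge 2$ for every interior index $1<i<a$ (this is exactly where that hypothesis is needed); the endpoint bounds $u_1=s_1\ge 1$ and $n-u_{a-1}=s_a-1\ge 0$ follow from $s_1,s_a\ge 1$. Hence $\ttt=[n]\sm\uuu$ is a valid strictly increasing list of taken spots whose complement in $[n]$ is $\uuu$, so $\uuu$ is indeed the unoccupied-spot list of $\ttt$ and $n-m=a-1$.

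Finally, by Definition~\ref{defn:parking-completion} the set $\IPC_n(\ttt)$ is exactly the set of weakly increasing sequences $(c_1,\dots,c_{n-m})$ with $c_j\le u_j$ for all $j$; with $n-m=a-1$ this is literally the set produced in the second paragraph. Therefore $D\mapsto\ccc$ is a bijection $\mathcal{DP}_\sss\to\IPC_n(\ttt)$, giving $|\mathcal{DP}_\sss|=|\IPC_n(\ttt)|$. I expect the only genuinely delicate point to be translating ``weakly above and to the left'' into the per-up-step inequalities together with the off-by-one bookkeeping: the path has $a-1$ up steps rather than $a$, even though $\lambda(\sss)$ is written with $a$ parts, the last of which equals the full width $n-1$ and so imposes no constraint. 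Everything else is a direct computation.
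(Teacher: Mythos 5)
Your proposal is correct, and in substance it is the argument the paper intends: the paper never actually writes a proof of Proposition~\ref{dyck-signature-equivalence}, presenting it as immediate from the identification in \S\ref{subsec:lattice-path} of $\IPC_n(\ttt)$ with lattice paths in $F_\uuu$ (equivalently, weakly increasing sequences with $c_i\le u_i$), together with the remark that $\mathcal{DP}_\sss$ consists of the lattice paths in the Ferrers diagram of $\lambda(\sss)$. Your bijection is exactly that identification, carried out directly on up-step coordinates rather than through the Ferrers-diagram intermediary; the one standard fact you invoke --- that two monotone paths with common endpoints satisfy the weakly-above-and-left relation if and only if they compare up step by up step --- is precisely how the paper reads off the constraint $c_i\le u_i$ from the points $(i,u_i)$. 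What your write-up adds, and what makes it worth spelling out, is the bookkeeping the paper elides. First, the count of $a-1$ up steps versus the $a$ parts of $\lambda(\sss)$, whose last part $n-1$ is the full width and imposes no constraint; the paper's own surrounding prose stumbles on this off-by-one (its example asserts that the $\sss$-Dyck paths for $\sss=(2,3)$ are the seven two-up-step paths of Figure~\ref{fig:figure2}, while the stated definition of $\mathcal{DP}_{(2,3)}$ yields paths from $(1,1)$ to $(4,2)$ with a single up step; the signature matching that figure under the proposition's conventions is $(2,3,1)$). Second, your verification that $\uuu$ is strictly increasing and contained in $[n]$, pinpointing where $s_1,s_a\ge 1$ and the interior hypothesis $s_i\ge 2$ enter, confirms that $\uuu$ genuinely is the unoccupied-spot list of $\ttt$ in the sense of Definition~\ref{defn:parking-completion} --- a check the paper omits entirely. (You also implicitly correct the statement's typo: the conclusion should read $|\mathcal{DP}_\sss|=|\IPC_n(\ttt)|$, which is exactly what your bijection delivers.)
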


For example, if $\sss=(2,3)$ then $\lambda(\sss)=(1,3)$, and the $\sss$-Dyck paths are those shown in Figure~\ref{fig:figure2}.  In general, given lists $\ttt,\uuu$ of taken and unoccupied spots in $[n]$, we can reconstruct a signature $\sss=(s_1,\ldots,s_a)$ of length $a=|\uuu|+1$ by setting $s_i=u_i-u_{i-1}-1$ (with the conventions $u_0=0$ and $u_{a}=n+1$).

Consequently, Theorems~\ref{thm:main2} and~\ref{count-IPC} can easily be translated into enumeration formulas for signature parking functions and signature Dyck paths, respectively.

\bibliography{bibliography}{}
\bibliographystyle{plain}
\end{document}